\numberwithin{equation}{section}
\title{The Existence of Embedded $G$-Invariant Minimal Hypersurface}
\author{Zhenhua Liu}
\date{}
\dedicatory{Dedicated to Xunjing Wei}
\begin{document}
	\begin{abstract}
		For a compact connected Lie group $G$ acting as isometries {of cohomogeneity not equal to $0$ or $2$} on a compact orientable Riemannian manifold $M^{n+1},$ we prove the existence of a nontrivial embedded $G$-invariant minimal hypersurface, that is smooth outside a set of Hausdorff dimension at most $n-7.$
	\end{abstract}
	\maketitle
	
	\newcommand\st{\bgroup\markoverwith{\textcolor{red}{\rule[0.5ex]{2pt}{0.4pt}}}\ULon}
	
	%Characters
	\newcommand{\ai}{\alpha}
	\newcommand{\be}{\beta}
	\newcommand{\sm}{\setminus}
	\newcommand{\Ga}{\Gamma}
	\newcommand{\ga}{\gamma}	
	\newcommand{\de}{\delta}
	\newcommand{\De}{\Delta}
	\newcommand{\e}{\epsilon}
	\newcommand{\lam}{\lambda}
	\newcommand{\Lam}{\Lamda}
	\newcommand{\om}{\omega}
	\newcommand{\Om}{\Omega}
	\newcommand{\si}{\sigma}
	\newcommand{\Si}{\Sigma}
	\newcommand{\vp}{\varphi}
	\newcommand{\cb}{\mathcal{B}}
	\newcommand{\rh}{\rho}
	\newcommand{\ta}{\theta}
	\newcommand{\Ta}{\Theta}
	\newcommand{\W}{\mathcal{O}}
	\newcommand{\ps}{\psi}
	
	%Font
	\newcommand{\mf}[1]{\mathfrak{#1}}
	\newcommand{\ms}[1]{\mathscr{#1}}
	\newcommand{\mb}[1]{\mathbb{#1}}
	%Symbols
	\newcommand{\cd}{\cdots}
	
	%Operators
	%Set theoretic
	\newcommand{\s}{\subset}
	\newcommand{\es}{\varnothing}
	\newcommand{\cp}{^\complement}
	\newcommand{\bu}{\bigcup}
	\newcommand{\ba}{\bigcap}
	\newcommand{\ito}{\uparrow}
	\newcommand{\dto}{\downarrow}		
	\newcommand{\di}{\operatorname{div}}

	%With arguments
	\newcommand{\ti}[1]{\tilde{#1}}
	\newcommand{\la}{\langle}
	\newcommand{\ra}{\rangle}
	\newcommand{\ov}[1]{\overline{#1}}
	\newcommand{\no}[1]{\left\lVert#1\right\rVert}
	\DeclarePairedDelimiter{\cl}{\lceil}{\rceil}
	\DeclarePairedDelimiter{\fl}{\lfloor}{\rfloor}
	\DeclarePairedDelimiter{\ri}{\la}{\ra}

	%Algebraic
	\newcommand{\du}{^\ast}
	\newcommand{\ma}{\mathbf{M}}
	\newcommand{\pf}{_{\#}}
	\newcommand{\vol}{\textnormal{Vol}}
	\newcommand{\is}{\cong}
	\newcommand{\co}{\mathcal{CO}^G}
	\newcommand{\n}{\lhd}
	\newcommand{\m}{^{-1}}
	\newcommand{\ts}{\otimes}
	\newcommand{\ip}{\lrcorner}
	\newcommand{\op}{\oplus}
	\newcommand{\xr}{\xrightarrow}
	\newcommand{\xla}{\xleftarrow}
	\newcommand{\xhl}{\xhookleftarrow}
	\newcommand{\xhr}{\xhookrightarrow}
	\newcommand{\mi}{\mathfrak{m}}
	\newcommand{\wi}{\widehat}
	\newcommand{\sch}{\mathcal{S}}

	%Differential
	\newcommand{\w}{\wedge}
	\newcommand{\X}{\mathfrak{X}}
	\newcommand{\pd}{\partial}
	\newcommand{\dx}{\dot{x}}
	\newcommand{\dr}{\dot{r}}
	\newcommand{\dy}{\dot{y}}
	\newcommand{\dth}{\dot{theta}}
	\newcommand{\pa}[2]{\frac{\pd #1}{\pd #2}}
	\newcommand{\na}{\nabla}
	\newcommand{\dt}[1]{\frac{d#1}{d t}\bigg|_{ t=0}}
	\newcommand{\ld}{\mathcal{L}}

	%Special Sets
	\newcommand{\N}{\mathbb{N}}
	\newcommand{\R}{\mathbb{R}}
	\newcommand{\Z}{\mathbb{Z}}
	\newcommand{\Q}{\mathbb{Q}}
	\newcommand{\C}{\mathbb{C}}
	\newcommand{\bh}{\mathbb{H}}
	
	%Sum and limits
	\newcommand{\lix}{\lim_{x\to\infty}}
	\newcommand{\li}{\lim_{n\to\infty}}
	\newcommand{\infti}{\sum_{i=1}^{\infty}}
	\newcommand{\inftj}{\sum_{j=1}^{\infty}}
	\newcommand{\inftn}{\sum_{n=1}^{\infty}}	
	\newcommand{\snz}{\sum_{n=-\infty}^{\infty}}	
	\newcommand{\ie}{\int_E}
	\newcommand{\ir}{\int_R}
	\newcommand{\ii}{\int_0^1}
	\newcommand{\sni}{\sum_{n=0}^\infty}
	\newcommand{\ig}{\int_{\ga}}
	\newcommand{\pj}{\mb{P}}
	
	%Text	
	\newcommand{\io}{\textnormal{ i.o.}}
	\newcommand{\aut}{\textnormal{Aut}}
	\newcommand{\out}{\textnormal{Out}}
	\newcommand{\diam}{\textnormal{diam}}
	\newcommand{\inn}{\textnormal{Inn}}
	\newcommand{\mult}{\textnormal{mult}}
	\newcommand{\ord}{\textnormal{ord}}
	\newcommand{\F}{\mathcal{F}}
	\newcommand{\V}{\mathbf{V}}	
	\newcommand{\II}{\mathbf{I}}
	\newcommand{\ric}{\textnormal{Ric}}
	\newcommand{\sef}{\textnormal{II}}
	\newcommand{\hm}{\mathcal{H}}
	%Logical
	\newcommand{\mv}{\mathcal{V}}
	\newcommand{\vd}{\mf{d}}
	\newcommand{\sv}{\mathcal{V}_\infty}
	\newcommand{\wh}{\Rightarrow}
	\newcommand{\eq}{\Leftrightarrow}
	
	%Environmental
	\newcommand{\eqz}{\setcounter{equation}{0}}
	\newcommand{\se}{\subsection*}
	\newcommand{\ho}{\textnormal{Hom}}
	\newcommand{\ds}{\displaystyle}
	\newcommand{\tr}{\textnormal{tr}}
	\newcommand{\id}{\textnormal{id}}
	\newcommand{\lp}{\llcorner}
	\newcommand{\im}{\textnormal{im}}
	\newcommand{\ev}{\textnormal{ev}}
	
	%Rep
	\newcommand{\gl}{\mf{gl}}
	\newcommand{\sll}{\mf{sl}}
	\newcommand{\su}{\mf{su}}
	\newcommand{\so}{\mf{so}}
	\newcommand{\ad}{\textnormal{ad}}

	\theoremstyle{plain}
	\newtheorem{thm}{Theorem}[section]
	\newtheorem{lem}[thm]{Lemma}
	\newtheorem{prop}[thm]{Proposition}
	\newtheorem{cor}{Corollary}
	\newtheorem*{claim}{Claim}
	\newtheorem*{pro}{Proposition}
	
	\theoremstyle{definition}
	\newtheorem{defn}{Definition}[section]
	\newtheorem{conj}{Conjecture}[section]
	\newtheorem{exmp}{Example}[section]
	
	\theoremstyle{remark}
	\newtheorem*{rem}{Remark}
	\newtheorem*{note}{Note}
	
	\tableofcontents
	
	\section{Introduction}
	Based on the continuous version of min-max construction in \cite{DT}, we prove the following theorem,
	\begin{thm}\label{t0.1}
		Let $G$ be a compact connected Lie group acting as isometries {of cohomogeneity not equal to $0$ or $2$} on a compact connected Riemannian manifold $M^{n+1}$ of dimension $(n+1)$ without boundary. If $n\ge 2$ and the action of $G$ is not transitive, then there exists an embedded minimal hypersurface $\Si^n\s M^{n+1}$ that is invariant under the action of $G$. Moreover, $\Si^n$ has no boundary and is smooth outside a set $\textnormal{Sing}\Si$ of Hausdorff dimension at most $n-7.$
	\end{thm}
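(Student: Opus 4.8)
The plan is to run the Hsiang--Lawson reduction to a weighted min-max problem on the orbit space and feed it into the continuous min-max construction of \cite{DT}. Let $c\ge 1$ denote the cohomogeneity of the action; by hypothesis $c\ne 2$. Let $M_{\mathrm{reg}}\s M$ be the union of principal orbits, an open dense $G$-invariant set, and $\pi:M\to M^*:=M/G$ the quotient map, so that $M^*_{\mathrm{reg}}:=M_{\mathrm{reg}}/G$ is a smooth $c$-dimensional manifold carrying the quotient metric $g^*$. On $M^*_{\mathrm{reg}}$ put the orbit-volume weight $w(p^*):=\hm^{n+1-c}(G\cdot p)>0$ and the conformal metric $\ti g:=w^{2/(c-1)}g^*$, so that $\hm^{c-1}_{\ti g}(\Si^*)=\int_{\Si^*}w\,d\hm^{c-1}_{g^*}$ for hypersurfaces $\Si^*\s M^*_{\mathrm{reg}}$ and, by the coarea formula, $\hm^{n}(\Si)=\hm^{c-1}_{\ti g}(\pi(\Si))$ for $G$-invariant hypersurfaces $\Si\s M_{\mathrm{reg}}$. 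By the first-variation formula of Hsiang--Lawson, $\pi$ induces a bijection between $G$-invariant integral varifolds in $M$ and integral varifolds in $M^*_{\mathrm{reg}}$ under which a $G$-invariant hypersurface is minimal in $(M,g)$ exactly when its projection is minimal in $(M^*_{\mathrm{reg}},\ti g)$. The compact space $M^*$ is a manifold with (possibly stratified, nonsmooth) boundary $\pd M^*=M^*\sm M^*_{\mathrm{reg}}$, the image of the non-principal orbits, along which $w\to 0$ and $\ti g$ degenerates. If $c=1$, then $M^*$ is an interval or a circle, $w$ attains an interior maximum, and the associated principal orbit is the desired $\Si^n$; so from now on $c\ge 3$, which is exactly the range in which the hypersurface min-max of \cite{DT} (ambient dimension $\ge 3$) applies.

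Next I would fix an equivariant sweepout and run the construction of \cite{DT} for $\hm^{c-1}_{\ti g}$ on $M^*$. Take a sweepout of $M$ by $G$-invariant hypersurfaces --- for instance, the level sets of a $G$-invariant Morse function on $M$ --- and let $\mathbf W$ be the min-max value of $\hm^{n}$ over all $G$-invariant sweepouts homotopic to it, equivalently the min-max value of $\hm^{c-1}_{\ti g}$ over the corresponding sweepouts of $M^*$. Then $\mathbf W>0$, since any sweepout of $M$ from the empty set to $M$ by $G$-invariant hypersurfaces encloses, at some time, a region of volume $\tfrac12\vol(M)$, and the isoperimetric inequality of $(M,g)$ bounds the $\hm^n$-mass of that configuration below by a fixed positive constant. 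Now pull-tight and run the combinatorial/isoperimetric argument of \cite{DT} for $\hm^{c-1}_{\ti g}$; since $w$ is smooth and positive on $M^*_{\mathrm{reg}}$, this produces an integral varifold $V^*$ of $\ti g$-mass $\mathbf W$ that is stationary for $\ti g$ and almost minimizing in small annuli contained in $M^*_{\mathrm{reg}}$, hence, by the regularity theory of \cite{DT}, is on $M^*_{\mathrm{reg}}$ a smooth embedded minimal hypersurface with singular set of dimension at most $c-8$. Its preimage $V:=\pi^{-1}(V^*)$ is then a nontrivial $G$-invariant stationary integral varifold in $M$ that is a smooth embedded minimal hypersurface over $M^*_{\mathrm{reg}}$ away from a set of dimension at most $(c-8)+(n+1-c)=n-7$.

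The remaining point, which I expect to be the main obstacle, is the analysis along the non-principal orbits $\pi^{-1}(\pd M^*)$: one must show that $V$ does not concentrate there, that it closes up with $\pd V=\es$, and that the singular set it acquires there still has dimension at most $n-7$, rather than the naive $n-2$ coming from a crude stratification count. Near a non-principal orbit $G\cdot p$ with isotropy $H$, the slice theorem identifies a tubular neighborhood $G$-equivariantly with $G\times_H D^m$, $H$ acting orthogonally on the slice $\R^m$, and the asymptotics of $w$ and $\ti g$ near the corresponding piece of $\pd M^*$ are dictated by this representation. From those asymptotics one shows that weighted stationarity forces $V^*$ to meet $\pd M^*$ orthogonally in the appropriate sense, so that $V$ has empty boundary, and that near $G\cdot p$ the varifold $V$ is the pullback of an $H$-invariant stationary varifold in the slice. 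Blowing up at $p$ and splitting off the $H$-fixed subspace $\R^f\s\R^m$, the local model reduces to an $H$-invariant minimal hypersurface in the sphere $S^{m-f-1}$, on which $H$ acts with cohomogeneity strictly less than $c$; a dimension-descending induction on the cohomogeneity, with the interior regularity already established as the engine --- available because the orbit space has dimension $\ge 3$, which fails precisely when $c=2$ (there codimension-one min-max in the $2$-dimensional interior produces a geodesic network, possibly with junctions, hence not embedded), whence the exclusion --- shows that over each singular stratum $V$ is smooth and embedded away from a set whose dimension, after adding the orbit dimension along that stratum, remains at most $n-7$. Combining the interior and the stratum-by-stratum contributions yields the embedded $G$-invariant minimal hypersurface $\Si^n$ with $\pd\Si=\es$ and $\dim_{\hm}\mathrm{Sing}\,\Si\le n-7$.

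In summary, the two technical hurdles are: (i) adapting the continuous min-max construction and its regularity theory to the weighted functional on the orbit space up to and including its degenerate boundary; and (ii) the slice-by-slice control of concentration, boundary behavior, and singular-set dimension at the non-principal orbits. I expect (ii) to be the crux.
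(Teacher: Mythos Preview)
Your approach differs fundamentally from the paper's, and the paper itself explains why the orbit-space reduction you propose does not go through. In the Remarks following the statement of the theorem, the authors observe that the weighted metric $w^{2/(c-1)}g^*$ on $M^*_{\mathrm{reg}}$ vanishes on the non-principal strata, so it carries no information there, and that the union of non-principal orbits can have dimension up to $n-1$; they conclude explicitly that one ``cannot deduce Theorem~\ref{t0.1} by a reduction to orbit space $M/G$. We do need a full-blown min-max argument.'' Accordingly the paper runs the entire continuous min-max of \cite{DT} directly on $M$, restricting every object (sweepouts, isotopies, competitors, annuli) to be $G$-invariant. The decisive regularity step is not an induction on cohomogeneity but a Lawson--Fleming averaging argument (Proposition~\ref{keyprop} and Corollary~\ref{gta}): a $G$-invariant perimeter minimizer in a small $G$-tube is shown to minimize among \emph{all} Caccioppoli competitors, not only invariant ones, which is what feeds the standard codimension-$7$ interior regularity into the replacement construction. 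Regularity at the centers of tubes is then handled by proving that tangent cones of $G$-varifolds split off the orbit tangent space (Lemma~\ref{stc}), reducing to Simons' theorem on the normal factor.

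The concrete gap in your proposal is step (i), not (ii). The machine of \cite{DT} is built for a closed smooth Riemannian manifold, whereas $(M^*,\ti g)$ is a compact stratified space on which the metric degenerates along $\pd M^*$. Neither the pull-tight map, nor the Almgren--Pitts combinatorics, nor the replacement and Schoen--Simon regularity package is available there without substantial new analysis that your outline does not supply; in particular you have no monotonicity formula or mass lower bound near $\pd M^*$, hence no mechanism preventing the min-max varifold $V^*$ from concentrating on the degenerate boundary, so even the existence of a nontrivial $V$ over $M_{\mathrm{reg}}$ is not secured. Your proposed resolution of (ii), a blow-up at a non-principal orbit followed by ``dimension-descending induction on the cohomogeneity'' of the slice action on the link $S^{m-f-1}$, is also problematic: that link cohomogeneity is $c-f-1$, which for suitable strata equals $2$, exactly the value you have excluded from the theorem and from your argument; and the inductive hypothesis would concern closed manifolds, not the germ of a degenerate cone, so there is no clean statement to induct on.
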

	Invariant means for any $s\in \Si^n,$ and all $g\in G,$ we have $g.s\in \Si^n$. In other words, $\Si^n$ is a union of orbits. The statement of our Theorem\autoref{t0.1} differs from Theorem 0.1 in \cite{DT} only in that our minimal hypersurface $\Si$ is invariant under $G$-actions. Our assumptions on the actions are very mild. 
	
	Jon Pitts and J. H. Rubinstein have announced a version of finite group action invariant min-max constructions in \cite{P1} and \cite{P2}, though a full proof has never appeared to the author's knowledge. Daniel Ketover has developed in \cite{DK} an equivariant min-max for finite group actions on three-dimensional manifolds, and thereby proved many results and conjectures appeared in \cite{P1} and \cite{P2}. This work is inspired by his approach, especially in the part of the existence theory of invariant stationary and invariant almost minimizing varifolds. However, the regularity theory regarding $G$-invariant replacements are done using vastly different methods. We relied on an old argument of Lawson in \cite{BL} that minimizing among invariant surfaces implies minimizing among all surfaces in our case.
	\subsection{Remarks}
	One might wonder whether we have better control of regularity. The answer is yes in some cases. Note that by invariance of $\Si$, we can push forward smooth tangent planes. Thus, if a point $s$ is in the singular set $\text{Sing}\Si,$ then $g.s$ must also belong to $\text{Sing}\Si.$ This implies the singularity sets must also be the union of orbits, so $\text{Sing}\Si$ consists of orbits of dimension no larger than $n-7.$ Consequently, in some practical cases like those in \cite{HL}, the singular set could roughly have dimensions of order $\frac{n}{2}$ or $\frac{n}{3},$ which are the largest orbit dimensions below $n-7$.
	
	However, generically, we can say nothing more about the regularity. By \cite{HL}, the projection of any minimal hypersurface $\Si^n\s M^{n+1}$ to the orbit space $M/G$ with a weighted metric $V^{2/k}\ov{g}$ will be a minimal hypersurface in the open manifold $M^{\text{principal}}/G$. Thus, the principal orbit part of $\Si^n$ can be reduced to a generic minimal hypersurface on $M/G$, which imposes the $n-7$ regularity. Meanwhile, that metric $V^{2/k}\ov{g}$ vanishes on singular orbits, so it provides no information about the singular orbits parts of $\Si^n.$ Since the union of singular orbits can have dimension up to $n-1,$ we cannot deduce Theorem \autoref{t0.1} by a reduction to orbit space $M/G.$ We do need a full-blown min-max argument.
	
	On the other hand, even though we use only one parameter here, the minimal surface we produce can have high index due to symmetry. For example, let $G=SO(2)\times SO(2)$ acting on $S^3(1)\s\R^4$ through $\rh_2\oplus\rh_2$, where $\rh_2$ is the natural rotation by $SO(2)$. The cohomogeneity is one, and the principal orbits are two dimensional Clifford tori, with one-dimensional exceptional orbits of circles. Thus, a min-max procedure will produce the minimal Clifford tours, which has index 5. For higher cohomogeneity, it is conceivable that one can even have higher indices.
	
	We have not used the Almgren-Pitts version of min-max in our construction due to technical reasons. The Almgren-Pitss theory is deeply rooted in the famous Almgren isomorphism theorem in \cite{FA},\begin{align}\label{isot}
	\pi_j(Z_k(M),0)\cong H_{j+k}(M,\Z)
	\end{align} However, if we're going to consider $G$-invariant cycles, then we have to first provide a suitable version of (\ref{isot}), of which we have not considered due to a lack of $G$-invariant homology theory. The referee has pointed out that using the double-cover argument of \cite{MN}, one can prove a codimension 1 modulo $2$ cycle version of \ref{isot}, i.e., the fundamental group of the space of $n$-dimensional $G$-invariant cycles mod $2$ is precisely $\Z/2\Z$. In principle, the regularity proof developed here should be able to carry over. It would be interesting to see what results in the non-invariant case, i.e. $G$ acting trivially, can carry over to this setting. For example, does there exist infinitely many $G$-invariant minimal hypersurfaces, if the cohomogeneity is larger than $2$?

	By using our method, we can prove that for cohomogeneity 2 actions, there always exist $G$-invariant minimal hypersurfaces which are regular outside of a singular set of Hausdorff dimension at most $n-1.$ In this case, the problem is essentially equivalent to finding geodesics with suitable boundary conditions on 2-dimensional orbifolds. Thus, we cannot rule out triple-junction like singularities, since such singularities appear in stable 1-dimensional varifolds. It would be interesting to use the existence of geodesics on 2-dimensional orbifolds to deduce the existence of invariant minimal hypersurfaces for cohomogeneity 2 actions. However, the question seems subtle as one cannot exclude the possibility that the geodesic will pass through the singular point of the orbifold \cite{CL}. To piece back the singular part of the geodesic requires a detailed analysis of the action itself. 
	
	We also note that if the action is free, then the theorem reduces to the one in \cite{DT}. The requirements of the connectedness of $G$ and the orientability of $M$ are both imposed for technical reasons and used essentially in the proof of Lemma \ref{free}.
	\subsection{Structure of the proof}
	In this section, we will sketch the main ideas and structure of our proof.
	
	We follow the usual convention to define cohomogeneity as the codimension of principal orbits with respect to $M$ (\cite{W}). 
	
	The following lemma says that in cohomogeneity $0$, the action is transitive and thus there doesn't exist any invariant hypersurface at all.
	\begin{lem}\label{l0.2}
		For a compact connected Lie group $G$ acting as isometries on $M^{n+1},$ the action is non-transitive if and only if the cohomogeneity is non-zero.
	\end{lem}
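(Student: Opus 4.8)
The plan is to prove the two directions separately: the implication ``transitive $\Rightarrow$ cohomogeneity $0$'' is a formality, while for the converse I would invoke the standard structure theory of compact (hence proper) group actions together with the connectedness of $M$.

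For the easy direction, if the action is transitive then $M=G\cdot p$ for every $p\in M$, so the unique orbit has dimension $n+1$; it is therefore a principal orbit, and the cohomogeneity is $(n+1)-(n+1)=0$.

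For the converse I would show directly that cohomogeneity $0$ forces $M$ to consist of a single orbit. Since $G$ is compact, the action is proper, so each orbit $G\cdot q$ is a compact embedded submanifold of $M$ diffeomorphic to $G/G_q$, and the cohomogeneity equals $(n+1)-\max_{q}\dim(G\cdot q)$, the maximum being attained along principal orbits. Pick a principal orbit $P=G\cdot p$; by hypothesis $\dim P=n+1=\dim M$. An embedded submanifold whose dimension equals that of the ambient manifold is open, so $P$ is open in $M$; and $P$, being the continuous image of the compact group $G$, is compact and hence closed in the Hausdorff manifold $M$. Thus $P$ is a nonempty clopen subset of the connected manifold $M$, which forces $P=M$, i.e. the action is transitive.

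The only points requiring care — rather than any genuine analytic obstacle — are the appeals to the standard facts that orbits of a compact group action are closed embedded submanifolds and that principal orbits realize the maximal orbit dimension; once these are granted, the clopen/connectedness argument closes the proof. I would stress that connectedness of $M$ is essential here: without it the full-dimensional orbit $P$ could be a single component of $M$ rather than all of it.
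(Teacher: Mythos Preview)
Your proof is correct and follows essentially the same route as the paper: both argue that a full-dimensional orbit, being closed (compactness of $G$) and open (embedded submanifold of top dimension), must equal the connected manifold $M$. The paper is terser, merely citing that orbits are a priori closed and concluding immediately, while you spell out the clopen-plus-connectedness step explicitly.
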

	\begin{proof}
		It is clear that cohomogeneity non-zero implies non-transitive. For the other direction, suppose the principal orbits are of the same dimension as $M^{n+1}$. Since orbits are a prior closed (\cite{W}), each of them must be the entire $M$, i.e, the action is transitive. 
	\end{proof}
	
	The cohomogeneity $1$ case is significantly easier and can be settled with an easy argument by using basic classifications as in \cite{PM}. We will deal with this in Section \ref{coho1}. The following arguments are for cohomogeneity at least $2.$
	
	The idea of the proof is as follows. The min-max construction in \cite{DT} can be broken down into four steps,
	
	Step 1. pulling-tight, which is the existence of stationary varifolds,\\
	Step 2. the existence of almost minimizing varifolds,\\
	Step 3. the existence of smooth stable replacements for almost minimizing varifolds,\\
	Step 4. regularity of varifolds with sufficiently many smooth stable replacements. 
	
	We will modify each step accordingly.
	
	First of all, our constructions are based on extracting time and again better-behaving subsequences of $G$-invariant sweepouts, so we have first to show the existence of such a thing. We will discuss this and fix our notations in Section \ref{term}
	
	In Section \ref{stv}, we will prove that stationary with respect to $G$-invariant vector fields implies stationary in general by using an averaging construction of Lie groups. Then we can adapt the pulling-tight procedure, which corresponds to Step 1.
	
	In Section \ref{amv}, we will run a modified combinatorial argument as in Section 3 of \cite{DT} to produce varifolds that are almost-minimizing among equivariant deformations, which corresponds to Step 2. 
	
	In Section \ref{repl}, we will use a modified argument of Section 4 of \cite{DT} to construct replacements with unknown regularity. To prove that replacements have the codimension $7$ regularity, which is essential to regularity theory, we will prove that these replacements are actually minimizing on a small enough scale with respect to all deformations. This is nontrivial since by construction they are expected to be only minimizing on a small scale with respect to $G$-invariant deformations. To this end, we use an argument first given by Lawson and Fleming in \cite{HL}. This is Step 3.
	
	Now that we have proved the existence of stable replacements with codimension $7$ regularity, we can simply reiterate and mimic the regularity theory developed in Section 5 of \cite{DT} to deduce the regularity of the $G$-invariant minimal hypersurface we have constructed. However, there are technical problems since we are no longer dealing with geodesic balls but tubes instead. Nevertheless, we will show that using the splitting of tangent cones proved in the appendix, the argument proceeds through. This is done in Section \ref{conc}.
	
	For an excellent introduction to basic Lie transformation group theory, please refer to \cite{W} and \cite{GB}. For our purposes, it is enough to know that there is a set of equivariantly diffeomorphic orbits of the highest dimension called principal orbits. The union of all such principal orbits forms an open dense set. The cohomogeneity is the codimension of any such principal orbit in $M.$ There are singular orbits that are smaller in dimension and exceptional orbits which are of the same dimension as the principal orbit and are topologically quotients of the principal orbits.
	
	Our paper is structured almost exactly the same as in \cite{DT}. To make comparisons easier, we intentionally try to phrase every theorem and definition as close as possible to \cite{DT} and will point out the difference explicitly. For a beautiful discussion of the framework of proof and amazingly informative figures, we strongly urge the reader to read \cite{DT}.
	\section{Terminology}\label{term}
	This section corresponds to Section 0 and 1 of \cite{DT}. We will fix the terminologies and prove the basic existence of a nontrivial family of $G$-invariant sweepouts.
	\subsection{Notations}
	Since we are constantly dealing with $G$-invariant objects in our paper, we will sometimes add $G$- in front of objects to indicate they are $G$-invariant. The exact definition of $G$-invariance is almost always clear from the context. We will list some here.  
	\begin{itemize}[noitemsep]
		\item[] A $G$-varifold $V$ satisfies $g\pf V=V$ for any $g\in G.$ 
		\item[] A $G$-vector field $X$ satisfies $g\pf X=X$ for all $g\in G.$ 
		\item[] A $G$-isotopy $\Phi(t)$ satisfies $g\m\circ \Phi(t)\circ g=\Phi(t)$ for all $t$ and $g\in G.$
		\item[] A $G$-set ($G$-neighborhood) is an (open) set which is a union of orbits.
	\end{itemize}
	Many other notations are the same as those in the paper \cite{DT}. However, we will sometimes add a subscript or superscript $G$ to signify $G$-invariance. We will summarize those as follows. 
	\newcommand{\an}{\textnormal{An}^G}
	\newcommand{\ann}{\mathcal{AN}^G}
	\begin{description}[font=\sffamily\bfseries,leftmargin=3cm, style=nextline]
		\item[$\pi_G$] the projection $\pi_G:M\mapsto M/G$ defined by $x\mapsto [x].$
		\item[$B_\rh^G(x),\ov{B}_\rh^G(x)$]
		open and closed tubes with radius $\rh$ around the orbit $G.x$
		\item[$\mathfrak{X}^G(M)$] the space of $G$-vector fields on $M$.
		\item[$\an(x,t,\tau)$] the open tube $B_t^G(x)\sm \ov{B}^G_\tau(x)$.
		\item[$\ann_r(x)$] the set $\{\an(x,\tau,t)|0<\tau<t<r\}.$
		\item[$d_G(U,V)$]  $\displaystyle\inf_{\substack{x\in \pi_G(U),\\y\in\pi_G(V)}}d_{M/G}(x,y),$ with $d_{M/G}$ the induced distance on $M/G$.
		\item[$\diam_G U$] the diameter of the projection $\pi_G(U)$ of a $G$-set $U$ in the metric space $M/G.$
		\item[$T_yG.x$] the tangent space of the orbit $G.x$ at some point $y$ on $G.x.$
	\end{description}
	Note that all of these are well defined in that $M/G$ is also a complete Hausdorff metric space (\cite{W}).
	\subsection{Basic definitions}
	In what follows, $M$ will denote a compact $(n+1)$ dimensional smooth Riemannian manifold without boundary. Let $G$ be a compact connected Lie group acting smoothly as isometries on $M$, with bi-invariant Haar measure $\mu$ on $G$ normalized to $\mu(G)=1.$
	
	First of all, following the convention of \cite{DT}, we will fix what we mean by a generalized hypersurface.
	\begin{defn}\label{d1.5}
		(Definition 1.5 in \cite{DT})	A generalized hypersurface $\Ga\s U$ is an $n$-dimensional integral varifold whose support is of Hausdorff dimension at most $n$ so that $\Ga$ is smooth outside a set $\textnormal{Sing}\Ga$ of Hausdorff dimension at most $n-7.$ 
	\end{defn}
	We will add stable (minimal) in front of generalized hypersurface to mean it is stable (stationary, respectively) as a varifold.
	
	The notion of generalized smooth family is just adding $G-$ to objects in \cite{DT}, but we will need a weaker notion of sweepout.
	\begin{defn}\label{swpt}(Compare Definition 0.2 in \cite{DT}.)
		A $G$-generalized smooth family is a $k$-parameter family of generalized hypersurfaces $\{\Ga_t\}_{t\in[0,1]^k}$ with the following properties
		\begin{itemize}[nosep]
			\item[(s0)] $\hm^n(\Ga_t)<\infty$ for all $t\in [0,1]^k$
			\item[(s1)] For all $t\in[0,1]^k$, $\Ga_t$ is a $G$-invariant smooth hypersurface in $\Ga_t\sm P_t,$ where $P_t$ consists of finitely many disjoint orbits;
			\item[(s2)] $\hm^n(\Ga_t)$ is continuous in $t$ for $t\in[0,1]^k.$ Moreover, $t\mapsto \Ga_t$ is continuous if we use Hausdorff topology on subsets in $M$;
			\item[(s3)] For any $U\s\s M\setminus P_{t_0}$, we have $\Ga_t\to \Ga_{t_0}$ smoothly as $t\to t_0.$
		\end{itemize}
		Moreover, a $G$-generalized family $\{\Ga_t\}_{t\in[0,1]}$ is a called $G$-sweepout of $M$ if there exists a one-parameter family of $G$-invariant open sets $\{\Om_t\}_{t\in[0,1]}$ satisfying
		\begin{itemize}[nosep]
			\item[(sw1)] $\Ga_t\sm\pd\Om_t\s P_t$ for all $t$.
			\item[(sw2)] $\Om_0=\es,\Om_1=M;$
			\item[(sw3)] $\vol(\Om_t\sm\Om_s)+\vol(\Om_s\sm\Om_t)\to 0$ as $t\to s$;
		\end{itemize}
	\end{defn}
	The only difference between our definition and Definition 0.2 in \cite{DT} is we define $P_t$ to be a finite set consisting of orbits, instead of points. We adopt this notion because passing through a critical point of an equivariant Morse function amounts to adding a handle bundle as in \cite{AW} instead of adding cells as in the usual Morse theory. Thus in general we have to assume sweepouts start and end at orbits. However, this change will not hinder our proof. By equivariant Morse theory as in \cite{AW}, equivariant Morse functions are dense in the space of smooth $G$-invariant functions, which come in abundance by lifting smooth functions on the quotient space. Moreover, by taking the gradient of those functions, we deduce the existence of nontrivial $G$-vector fields.
	\begin{prop}\label{le}
		If $f:M\to [0,1]$ is a $G$-equivariant Morse function in the sense of \cite{AW}, then $\{\Ga_t=f\m(t)\}_{t\in[0,1]}$ is a $G$-sweepout.
	\end{prop}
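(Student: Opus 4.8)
The plan is to verify each of the four defining conditions (s0)--(s3) of a $G$-generalized smooth family, plus the three sweepout conditions (sw1)--(sw3), for the family $\{\Ga_t = f^{-1}(t)\}_{t\in[0,1]}$, where $f$ is a $G$-equivariant Morse function. The guiding principle is that an equivariant Morse function, away from its finitely many critical orbits, behaves exactly like an ordinary smooth submersion, so the level sets are honest smooth $G$-invariant hypersurfaces; at a critical value the level set acquires a finite union of critical orbits, which is precisely the set $P_t$ allowed in Definition \ref{swpt}. I would first record that since $f$ is $G$-invariant, each level set $f^{-1}(t)$ is automatically a union of orbits, hence $G$-invariant, and since $M$ is compact each $f^{-1}(t)$ is compact with finite $\hm^n$ measure, giving (s0).

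Next I would treat the critical structure. By the equivariant Morse theory of \cite{AW}, the critical set of $f$ is a finite union of nondegenerate critical orbits, and there are only finitely many critical values; let these be $0 \le c_1 < \dots < c_N \le 1$. For $t$ not a critical value, $f^{-1}(t)$ is a smooth compact $G$-invariant hypersurface by the regular value theorem, so (s1) holds with $P_t = \es$. For $t = c_i$ a critical value, set $P_t$ to be the (finite) union of critical orbits at level $c_i$; on $f^{-1}(c_i)\sm P_{c_i}$, $f$ still has $c_i$ as a regular value, so that set is a smooth $G$-invariant hypersurface, again giving (s1). For the continuity statements (s2) and (s3), away from critical values the implicit function theorem gives smooth dependence of $f^{-1}(t)$ on $t$, hence smooth convergence on compact subsets disjoint from $P_{t_0}$; this is (s3), and it also yields continuity of $t\mapsto\Ga_t$ in Hausdorff topology and of $t\mapsto\hm^n(\Ga_t)$ away from critical values. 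At a critical value, one uses the Morse normal form near each critical orbit: the handle-bundle attachment of \cite{AW} changes the level set by a controlled amount whose $\hm^n$-contribution and Hausdorff displacement tend to zero as $t\to c_i$, giving continuity of volume and of $t\mapsto\Ga_t$ there as well.

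For the sweepout conditions, I would take $\Om_t = f^{-1}([0,t)) = \{x : f(x) < t\}$, which is open and $G$-invariant since $f$ is $G$-invariant and continuous. Then $\pd\Om_t \s f^{-1}(t) = \Ga_t$, and conversely any point of $\Ga_t$ that is not in $\pd\Om_t$ must be a local extremum of $f$ restricted to a neighborhood, hence a critical point, so $\Ga_t\sm\pd\Om_t\s P_t$; this is (sw1). Conditions $\Om_0 = f^{-1}(\{0\})^{\circ}$-type boundary cases require a small remark: $\Om_0 = \es$ and $\Om_1 = M$ hold after noting $f$ takes values in the closed interval and $f^{-1}(\{1\})$ has empty interior (it is contained in a level set), so (sw2) is immediate. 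Finally (sw3) is the statement that $t\mapsto \vol(\Om_t)$ is continuous, which follows from the coarea formula: $\vol(\Om_t) = \int_0^t \left(\int_{f^{-1}(s)} |\na f|^{-1}\, d\hm^n\right) ds$ on the regular part, and the single critical level sets have measure zero in $M$, so $\vol(\Om_t\sm\Om_s) + \vol(\Om_s\sm\Om_t) = |\vol(\Om_t) - \vol(\Om_s)| \to 0$.

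**The main obstacle** I anticipate is not any single condition in isolation but the behavior at critical values: one must check that passing a nondegenerate critical orbit produces only the mild degeneration permitted by (s1)--(s3), namely that the limiting level set is smooth off a finite union of orbits and that no $\hm^n$-mass is lost or gained in the limit. This is exactly where the generalization from points to orbits in Definition \ref{swpt} is used, and it is handled by the equivariant Morse lemma of \cite{AW}: near a critical orbit $G.p$ the function looks like a $G$-invariant quadratic form on the normal bundle, whose zero set and nearby level sets one can describe explicitly, and the handle-bundle picture shows the measure-theoretic contribution is continuous across $c_i$. Everything else is a routine application of the regular value theorem, the implicit function theorem, and the coarea formula.
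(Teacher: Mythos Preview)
Your proposal is correct and follows essentially the same approach as the paper, which simply remarks that the argument is the standard one for ordinary Morse functions, with the equivariant Morse lemma (Lemma~4.1 in \cite{AW}) supplying the finiteness of critical orbits and the local normal form near them. The only cosmetic point to tidy up is (sw2): with $\Om_t=\{f<t\}$ one has $\Om_1=M\sm f^{-1}(1)\ne M$, and ``empty interior'' does not repair this; but since $f^{-1}(1)$ is a finite union of critical orbits you may simply declare $\Om_1=M$ and (sw1), (sw3) continue to hold.
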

	\begin{proof}The proof is the same as proving level sets of Morse function form a sweep-out. The only part that might need attention is to prove that there are only finitely many orbits that might form non-smooth parts of critical submanifolds. This can be deduced by using Lemma 4.1 in \cite{AW}, the equivariant Morse lemma.
	\end{proof}
	
	For any one-parameter generalized family $\{\Ga_t\},$ we define
	\begin{align*}
	\F(\{\Ga_t\})=\max_{t\in[0,1]}\hm^n(\Ga_t).
	\end{align*}
	\begin{prop}\label{pst}
		$\F(\{\Ga_t\})\ge C(M)$ for any sweepout $\{\Ga_t\}$, where $C(M)$ is a positive constant depending only on $M.$
	\end{prop}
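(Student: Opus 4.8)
The plan is to show that every $G$-sweepout must have at least one slice with $n$-volume bounded below by a positive constant depending only on $M$, by using the isoperimetric inequality on $M$ together with the fact that the open sets $\Om_t$ interpolate between $\es$ and $M$. First I would fix a geodesic ball $B_\rh(p)\s M$ small enough that the relative isoperimetric inequality holds in it, i.e. there is a constant $c_0=c_0(M)>0$ so that for any set of finite perimeter $E\s B_\rh(p)$ one has $\hm^n(\pd^* E\cap B_\rh(p))\ge c_0\min\{\vol(E),\vol(B_\rh(p)\sm E)\}^{n/(n+1)}$. Next, using (sw2) and (sw3), the function $t\mapsto \vol(\Om_t\cap B_\rh(p))$ is continuous, equals $0$ at $t=0$ and equals $\vol(B_\rh(p))$ at $t=1$; hence by the intermediate value theorem there is some $t_0$ with $\vol(\Om_{t_0}\cap B_\rh(p))=\tfrac12\vol(B_\rh(p))$.

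For that $t_0$, the relative isoperimetric inequality gives $\hm^n(\pd\Om_{t_0}\cap B_\rh(p))\ge c_0\bigl(\tfrac12\vol(B_\rh(p))\bigr)^{n/(n+1)}=:C(M)>0$. Finally, by (sw1) we have $\Ga_{t_0}\sm\pd\Om_{t_0}\s P_{t_0}$, and $P_{t_0}$ is a finite union of orbits, which has $\hm^n$-measure zero when the action is not transitive (each orbit has dimension $\le n$, but a finite union of such orbits is $\hm^n$-negligible unless one of them is $n$-dimensional and coincides with a component of $\Ga_{t_0}$; in any case $\pd\Om_{t_0}$ and $\Ga_{t_0}$ agree up to an $\hm^n$-null set, or one argues via the structure of $\Ga_{t_0}$ as a generalized hypersurface whose support has dimension $\le n$). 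Therefore $\hm^n(\Ga_{t_0})\ge \hm^n(\pd\Om_{t_0}\cap B_\rh(p))\ge C(M)$, and a fortiori $\F(\{\Ga_t\})=\max_t\hm^n(\Ga_t)\ge C(M)$.

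The main obstacle I anticipate is the bookkeeping that links $\Ga_{t_0}$ to $\pd\Om_{t_0}$: one must check that the reduced boundary $\pd^*\Om_{t_0}$ used in the isoperimetric inequality is contained (up to $\hm^n$-null sets) in the support of $\Ga_{t_0}$, which follows from (s1) and (sw1) since outside the finite set of orbits $P_{t_0}$ the varifold $\Ga_{t_0}$ is a smooth hypersurface equal to $\pd\Om_{t_0}$. One should also be slightly careful that the ball $B_\rh(p)$ can be chosen independently of the sweepout — it depends only on the local geometry of $M$ — so that the resulting constant $C(M)$ is genuinely uniform. No equivariance is needed here; the bound holds for arbitrary sweepouts and in particular for $G$-sweepouts.
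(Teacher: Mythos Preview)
Your proposal is correct and takes essentially the same approach as the paper: the paper's proof is simply a reference to Proposition~0.5 of \cite{DT}, noting that it ``only uses isoperimetric inequality,'' and your argument is precisely that isoperimetric argument spelled out---continuity of $t\mapsto\vol(\Om_t)$ from (sw2), (sw3), the intermediate value theorem to find a slice of half volume, and the (relative) isoperimetric inequality to bound the perimeter from below. Your closing remark that no equivariance is needed is exactly the point of the paper's one-line proof: the $G$-sweepout is in particular a sweepout in the sense of \cite{DT}, so the non-equivariant bound carries over verbatim.
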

	\begin{proof}
		The same proof of Proposition 0.5 in \cite{DT} carries over, which only uses isoperimetric inequality.
	\end{proof}
	For a family $\Lambda$ of sweepouts, let
	\begin{align*}
	m_0(\Lambda)=\inf_{\{\Ga\}_t\in\Lambda}\F=\inf_{\{\Ga_t\}\in \Lambda}\max_{t\in [0,1]}\hm^n(\Ga_t).
	\end{align*}
	By Proposition \ref{pst}, $m_0(\Lambda)\ge C(M)>0.$ We call a sequence $\{\{\Ga_t\}^k\}\s \Lambda$ minimizing if
	\begin{align*}
	\lim_{k\to\infty}\F(\{\Ga_t\}^k)=m_0(\Lambda).
	\end{align*}
	A sequence of hypersurfaces $\{\Ga^k_{t_k}\}$ is called a min-max sequence of a family $\Lambda$ if $\{\{\Ga_t\}^k\}$ is minimizing and $\lim_k\hm^n(\Ga_{t_k}^k)=m_0(\Lambda).$ 
	
	We will only deal with families $\Lambda$ closed under the following notion of homotopy.
	\begin{defn}(Compare Definition 0.6 in \cite{DT}.)
		We call two sweepouts $\{\Ga_s^0\}$ and $\{\Ga_s^1\}$ homotopic if for some two parameter $G$-generalized family $\{\Ga_t\}_{t\in[0,1]^2}$ we have $\Ga_{(0,s)}=\Ga_s^0$ and $\Ga_{(1,s)}=\Ga_s^1$. A family $\Lambda$ of $G$-sweepouts is said to be $G$-homotopically closed if $\{\Ga_t\}\in \Lambda$ implies that any sweepout homotopic to $\{\Ga_t\}$ is contained in $\Lambda.$
	\end{defn}
	The following smaller classes of $G$-homotopies will also be very useful.
	\begin{defn}(Compare Definition 2.1 in \cite{DT}.)
		Let $X:[0,1]\to \mathfrak{X}(M)$ be a smooth map to the space of smooth vector fields on $M.$ Suppose $F([0,1])\s \mathfrak{X}^G(M).$ Let $\Psi_t:[0,1]\times M\to M$ be the diffeomorphism corresponding to $F(t)$. If $\{\Ga_t\}_{t\in[0,1]}$ is a $G$-sweepout, then $\{\Psi_t(s,\Ga_t)\}_{(t,s)\in[0,1]^2}$ is called a $G$-homotopy from $\{\Ga_t\}$ to $\{\Psi_t(1,\Ga_t)\}$ induced by ambient isotopies.
	\end{defn}
	And finally, we will give the definition of $G$-almost minimizing varifolds, which is essentail to our regularity theory,
	\begin{defn}
		Fix $\e>0$ and a open $G$-set $U\s M$. Suppose $\Om$ is another open $G$-set. Then the boundary $\pd \Om$ of $G$-open set in $M$ is $\e$-$G$-almost minimizing ($\e$-G-a.m.) in $U$ if there are no 1-parameter families of boundaries of open $G$-sets $\Om_t,t\in{[0,1]},$ so that
		\begin{itemize}[nosep]
			\item[(a.1)] (s0) (s1), (s2), (s3), (sw1), and (sw3) of Definition \ref{swpt} hold,
			\item[(a.2)] $\Om_0=\Om,$ and $\Om_t\setminus U=\Om\setminus U$ for every $t$,
			\item[(a.3)]$\hm^n(\pd \Om_t)\le \hm^n(\pd \Om)+\frac{\e}{8}$ for all $t\in[0,1],$
			\item[(a.4)] $\hm^n(\pd\Om_1)\le \hm^n(\pd \Om)-\e.$	
		\end{itemize}
		If there exists a sequence $\e_k\to 0$ so that a collection $\{\pd \Om^k\}$ of generalized hypersurfaces is $\e_k$-G-a.m. in $U$, then we say $\{\pd \Om^k\}$ is $G$-almost minimizing in $U.$ Note that if $V\s U$ are both $G$-open sets, then an $\e$-$G$-a.m. set in $U$ is also $\e$-$G$-a.m. in $V.$
	\end{defn}
	One major difference between almost minimizing in \cite{DT} and $G$-almost minimizing is that for $G$-a.m. we're only considering deformations under $G$-vector fields. In fact, this difference is significant and cannot be remedied easily, unlike the distinction between $G$-stationary and stationary in the next section.
	
	Finally, we need the notion of replacement. The definition is the same as \cite{DT}, and we do not impose that stability is with respect to $G$-invariant vector fields only. The replacements we construct are a priori only stable with respect to invariant vector fields, but a simple first eigenfunction argument proves stability with respect to all vector fields.
	\begin{defn}
		(Definition 2.5 in \cite{DT}) Let $V\in\mathcal{V}(M)$ be a stationary varifold and $U\s M$ be an open set. A stationary varifold $V'\in \mathcal{V}(M)$ is called a replacement for $V$ in $U$ if $V'=V$ on $M\sm\ov{U}$, $\no{V'}(M)=\no{V}(M)$ and $V'\ip U$ is a stable minimal generalized hypersurface.
	\end{defn}
	\section{Existence of $G$-invariant Stationary Varifolds}\label{stv}
	This section will be dedicated to proving the following proposition
	\begin{prop}\label{esv}(Compare Proposition 2.2 in \cite{DT}.)
		If $\Lambda$ is a family of $G$-sweepouts closed under $G$-homotopies induced by ambient $G$-isotopies, then there exists a minimizing sequence $\{\{\Ga_t\}^k\}\s\Lambda$ so that if $\{\Ga_{t_k}^k\}$ is a min-max sequence, then $\Ga_{t_k}^k\to V$ for some stationary $G$-varifold $V $.
	\end{prop}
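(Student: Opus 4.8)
The plan is to adapt the standard pulling-tight argument of Almgren--Pitts (in the continuous form used in \cite{DT}, Proposition 2.2) to the equivariant setting, the only new ingredient being that we must produce deformations that are realized by $G$-invariant ambient isotopies so that the resulting minimizing sequence stays inside $\Lambda$. First I would recall the setup: pick any minimizing sequence $\{\{\Ga_t\}^i\}\s\Lambda$, so $\F(\{\Ga_t\}^i)\to m_0(\Lambda)$. By compactness of varifolds with bounded mass (and since $\hm^n(\Ga_t^i)\le \F(\{\Ga_t\}^i)$ is uniformly bounded), any min-max sequence has a subsequence converging to some varifold $V$ with $\no{V}(M)=m_0(\Lambda)$; the content is to arrange, by replacing the minimizing sequence, that \emph{every} min-max sequence converges (subsequentially) to a varifold that is \emph{stationary}, and moreover stationary as a genuine varifold, not merely with respect to $G$-vector fields.

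The key device is the following: by the averaging construction of Section \ref{stv} (which we may invoke, since the statement ``$G$-stationary $\Rightarrow$ stationary'' is proved there via integrating $g\pf$ over the Haar measure $\mu$), it suffices to rule out non-$G$-stationary limit varifolds, i.e. to show that the limit $V$ satisfies $\de V(X)=0$ for all $X\in\X^G(M)$. Suppose not; then there is a compact set $K$ of varifolds in the ``bad'' region --- the set of varifolds $W$ with $\no{W}(M)\le m_0(\Lambda)$ that are not $G$-stationary --- and on this set one can choose, continuously in $W$, a $G$-vector field $X_W\in\X^G(M)$ with $\de W(X_W)\le -c<0$. Here I would use a partition-of-unity/continuity argument exactly as in \cite{DT}: cover the bad set by finitely many open pieces on each of which a single $G$-vector field decreases mass, and glue. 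The crucial point is that $G$-vector fields can be \emph{averaged}: if $X$ is any vector field with $\de W(X)<0$ then $\bar X:=\int_G g\pf X\,d\mu(g)$ is $G$-invariant and, by $G$-invariance of the varifolds we care about together with linearity of the first variation, $\de W(\bar X)=\int_G \de W(g\pf X)\,d\mu= \int_G \de (g\pf W)(X)\,d\mu$, which for $G$-varifolds $W$ equals $\de W(X)<0$; so we never lose the decreasing direction by passing to invariant fields. Flowing by the $G$-isotopies generated by these $G$-vector fields then pushes each slice $\Ga_t^i$ of a (nearly) minimizing sweepout to strictly smaller area whenever $\Ga_t^i$ is close to the bad set, while leaving slices with small area essentially unchanged; the resulting family is $G$-homotopic to $\{\Ga_t\}^i$ through ambient $G$-isotopies, hence lies in $\Lambda$ by hypothesis, and has strictly smaller $\F$ in the limit --- contradicting minimality of $m_0(\Lambda)$ after the usual diagonal argument. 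Thus every min-max sequence of the improved minimizing sequence converges to a $G$-stationary varifold, which by the averaging lemma of Section \ref{stv} is stationary.

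The routine but slightly delicate steps are: (i) checking that the cut-and-paste interpolation between ``flow by $X$'' on high-area slices and ``do nothing'' on low-area slices preserves properties (s0)--(s3) and (sw1)--(sw3) of Definition \ref{swpt} and produces a $G$-generalized family (one must keep the finite orbit sets $P_t$ under control, but since the $G$-isotopies are smooth and $G$-equivariant this is automatic); (ii) the standard but bookkeeping-heavy diagonalization to pass from ``each bad slice can be improved'' to ``the whole sweepout's max area drops''. I do not expect a real obstacle here: the averaging trick makes the equivariant case formally identical to the non-equivariant one of \cite{DT}, and the only genuinely new input --- that the improving isotopies can be taken $G$-equivariant without sacrificing the mass decrease --- is exactly what the averaging computation above delivers. \emph{The main point to be careful about} is therefore simply to verify that the family $\Lambda$ being closed under $G$-homotopies induced by ambient $G$-isotopies is enough (as opposed to general $G$-homotopies) for the pulling-tight deformation to keep us in $\Lambda$; but that is built into the construction, since every deformation we apply is by definition an ambient $G$-isotopy.
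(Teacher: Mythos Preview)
Your proposal is correct and follows essentially the same route as the paper: reduce to $G$-stationarity via the Haar-averaging identity $\de V(\overline{X})=\de V(X)$ for $G$-varifolds (which is exactly the content of Lemma~\ref{cute} and Corollary~\ref{gst}), then run the standard pulling-tight of \cite{CD} inside the compact metric space of $G$-varifolds with bounded mass, using $G$-invariant vector fields and a partition of unity to build the tightening map. The paper's write-up is organized slightly differently---it sets up the annular shells $\mathcal{V}_k=\{2^{-k-1}\le\vd(V,\mathcal{V}_\infty^G)\le 2^{-k+1}\}$ explicitly and then defers to \cite{CD}---but the substance is the same.
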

	In \cite{DT}, the proof refers to Proposition 4.1 of \cite{CD}. In our case, we first have to develop some basic facts about stationary properties under $G$-vector fields, and then the same proof as in \cite{DT} applies.
	\subsection{$G$-stationary implies stationary}
	The idea of development in this subsection is inspired by Section 3 in \cite{DK}. By abuse of notation, we use $\no{\de V}_G(O)$ to denote the total first variation with respect to $G$-vector fields compactly supported on an open set $O$, i.e.,
	\begin{align*}
	\no{\de V}_G(O)=\sup\{\de V(\chi)|g\pf \chi=\chi,\forall g\in G,\no{\chi}\le 1,\text{spt}\chi\s O\}.
	\end{align*}
	We will use $\no{\de V}_G$ to  denote $\no{\de V}_G(M).$
	\begin{defn}
		A $G$-varifold $V$ is $G$-stationary if $\no{\de V}_G(M)=0.$
	\end{defn}
	\begin{lem}\label{cute}
		For any $G$-varifold $V$, and $G$-neighborhood $O$, we have
		\begin{align*}
		\no{\de V}_G(O)=\no{\de V}(O).
		\end{align*}
		
	\end{lem}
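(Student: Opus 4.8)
The plan is to prove the inequality $\no{\de V}_G(O)\le \no{\de V}(O)$, since the reverse inequality is trivial (any $G$-vector field supported in $O$ is in particular a vector field supported in $O$). So let $\chi$ be an arbitrary smooth vector field on $M$ with $\no{\chi}\le 1$ and $\text{spt}\,\chi\s O$; I want to produce a $G$-vector field $\ti\chi$ supported in $O$ with $\no{\ti\chi}\le 1$ and $\de V(\ti\chi)=\de V(\chi)$. The natural candidate is the group average
\begin{align*}
\ti\chi(x)=\int_G (g\pf\chi)(x)\,d\mu(g),
\end{align*}
where $\mu$ is the normalized bi-invariant Haar measure on $G$. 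First I would check this is well defined and smooth: the map $(g,x)\mapsto (g\pf\chi)(x) = dg\,(\chi(g\m x))$ is smooth in both variables, $G$ is compact, so the integral is a smooth vector field. Since $O$ is a $G$-neighborhood and $\text{spt}\,\chi\s O$, every $g\pf\chi$ is supported in $O$, hence so is $\ti\chi$. The pointwise bound $\no{\ti\chi(x)}\le\int_G\no{(g\pf\chi)(x)}\,d\mu(g)=\int_G\no{\chi(g\m x)}\,d\mu(g)\le 1$ uses that $G$ acts by isometries (so $\no{(g\pf\chi)(x)}=\no{\chi(g\m x)}$) and $\mu(G)=1$.

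Next I would verify $G$-invariance of $\ti\chi$: for $h\in G$, $h\pf\ti\chi = \int_G h\pf g\pf\chi\,d\mu(g)=\int_G (hg)\pf\chi\,d\mu(g)=\int_G g'\pf\chi\,d\mu(g')=\ti\chi$, by left-invariance of $\mu$ and the substitution $g'=hg$. Then the key computation is $\de V(\ti\chi)=\de V(\chi)$. Here I would use two facts. First, since $V$ is a $G$-varifold, $g\pf V=V$ for all $g$, and the first variation transforms naturally under pushforward: $\de V(g\pf\chi)=\de(g\pf V)(\chi')$ evaluated appropriately — more precisely, $\de(g\pf V)(\chi)=\de V(g\m\pf\chi)$, so $g\pf V=V$ gives $\de V(g\pf\chi)=\de V(\chi)$ for every $g\in G$. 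Second, $\de V$ is continuous and linear in the vector field (with respect to $C^1$ convergence), so it commutes with the Haar integral:
\begin{align*}
\de V(\ti\chi)=\de V\!\left(\int_G g\pf\chi\,d\mu(g)\right)=\int_G \de V(g\pf\chi)\,d\mu(g)=\int_G \de V(\chi)\,d\mu(g)=\de V(\chi).
\end{align*}
Taking the supremum over all such $\chi$ then yields $\no{\de V}(O)\le\no{\de V}_G(O)$, completing the proof.

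The main technical point to be careful about — and what I expect to be the only real obstacle — is justifying that $\de V$ commutes with the Haar integral, i.e. that $\de V\big(\int_G g\pf\chi\,d\mu(g)\big)=\int_G\de V(g\pf\chi)\,d\mu(g)$. This follows because $\de V(\chi)=\int \di_S\chi(x)\,dV(x,S)$ depends on $\chi$ only through $\chi$ and $\na\chi$, both of which, together with their dependence on the parameter $g$, are continuous on the compact set $G\times M$; hence the integrand is dominated and one may interchange the two integrals by Fubini's theorem. Establishing the identity $\de V(g\pf\chi)=\de V(\chi)$ rigorously requires the change-of-variables formula for the first variation under a diffeomorphism that is an isometry, which is standard. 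Everything else is bookkeeping with Haar measure and the isometric action.
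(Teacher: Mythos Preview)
Your proposal is correct and follows essentially the same approach as the paper: average the vector field over $G$ via Haar measure, check $G$-invariance, use $g\pf V=V$ together with Fubini to show the averaged field has the same first variation, and bound its norm by $1$ using that $G$ acts by isometries. The only cosmetic differences are that the paper averages $(g\m)\pf\chi$ (arising from conjugated flows $g\m\circ\psi(t)\circ g$) rather than $g\pf\chi$, and bounds the norm via a Cauchy--Schwarz step instead of your direct triangle-inequality estimate; neither affects the argument.
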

	\begin{proof}
		It suffices to prove that for any vector field $X,$ with $|X|\le 1$ supported in $O,$ there exists a $G$-vector field $X_G$, with $|X_G|\le 1$ so that,
		\begin{align*}
		\de V(X)(O)=\de V(X_G)(O).
		\end{align*}
		Use $\psi(t)$ to denote the diffeomorphisms generated by $X.$ Consider the modified diffeomorphism
		\begin{align*}
		\psi_g(t)=g\m \circ\psi(t)\circ g.
		\end{align*}
		Let $X_g$ to be the vector fields corresponding to $\psi_g(t).$ Now define
		\begin{align*}
		X_G(p)=\int_G X_g(p)d\mu(g),
		\end{align*}where the integral is carried out in $T_pM.$ By construction, $X_G$ is supported in $O.$ We have
		\begin{align*}
		g\pf X_G(p)=&g\pf\int_G  X_h(g\m p)d\mu(h)
		=\int_G g\pf X_h(g\m p)d\mu(h)\\
		=&\int_G \frac{d}{dt}g\circ h\m \psi(t)\circ h\circ g\m d\mu(h)	=\int_G X_{hg\m}(p)d\mu(h)\\
		=&\int_G X_{h}(p)d\mu(h)=X_G(p),
		\end{align*} so $X_G$ is $G$-invariant. 
		Since $g$ and $g\m$ are all isometries, we have $(g\m\circ\psi(t)\circ g)\pf V=g\m\pf(\psi(t)\pf( g\pf V))=\psi(t)\pf V.$ By linearity of first variation, we can conclude that
		\begin{align*}
		\de V(X_G)(O)=&\int_{G_n(O)} \di_S X_GdV(x,S)	=\int_{G_n(O)} \di_S \int_G X_gd\mu(g)dV(x,S)\\
		=&\int_G \int_{G_n(O)} \di_SX_gdV(x,S)d\mu(g)
		=\int_G\de V(X_g)(O)d\mu(g)\\
		=&\int_G \de V(X)(O)d\mu(g)
		=\de V(X)(O).
		\end{align*}by Fubini theorem. Finally, note that
		\begin{align*}
		|X_G|^2=&\ri{X_G,\int_GX_hd\mu}
		=\int_G\ri{X_G,X_h}d\mu
		\le\int_G|{X_G}||X_h|d\mu(h)
		=|X_G|\int|X_h|d\mu(h),
		\end{align*}
		so this yields
		\begin{align*}
		|X_G|\le {\int_G|X_h|d\mu}=1.
		\end{align*}	
	\end{proof}
	\begin{cor}\label{gst}
		A $G$-stationary $G$-varifold $V$ is stationary.	\end{cor}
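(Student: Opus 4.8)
The plan is simply to invoke Lemma \ref{cute} with the distinguished $G$-neighborhood $O = M$. Indeed, $M$ is a union of orbits, hence trivially an open $G$-set, so Lemma \ref{cute} applies and yields
\begin{align*}
\no{\de V}(M) = \no{\de V}_G(M).
\end{align*}
Now by hypothesis $V$ is $G$-stationary, which by definition means exactly $\no{\de V}_G(M) = 0$. Combining the two displays gives $\no{\de V}(M) = 0$, i.e. $\de V(X) = 0$ for every smooth compactly supported vector field $X$ on $M$, which is the statement that $V$ is stationary.

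The only point one must verify is that $M$ is an admissible choice of set in Lemma \ref{cute}, and this is immediate since the whole manifold is vacuously a $G$-set. There is no real obstacle in this corollary: all the content sits in Lemma \ref{cute}, whose proof upgrades an arbitrary test vector field $X$ to a $G$-invariant one $X_G$ by averaging the pushed-forward flows $\psi_g(t) = g\m \circ \psi(t) \circ g$ against the normalized Haar measure, using that $G$ acts by isometries so that $\de V(X_g) = \de V(X)$ for all $g$, and then applying Fubini. Hence the corollary follows in one line once Lemma \ref{cute} is in hand.
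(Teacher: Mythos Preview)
Your proof is correct and matches the paper's own argument exactly: the paper also simply sets $O=M$ in Lemma \ref{cute} to conclude. Your additional remarks about why $M$ is an admissible $G$-set and the summary of the averaging argument are accurate but unnecessary for the corollary itself.
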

	\begin{proof}
		By letting $O=M$ in Lemma \autoref{cute}, we deduce immediately the desired result.
	\end{proof}
	\subsection{Proof of Proposition \ref{esv}}\label{pfe}
	Let $\{\{\Si_t\}^n\}$ be a minimizing sequence. We will deform it into another sequence $\{\Ga_t^n\}$ using ambient $G$-isotopies so that any min-max subsequence of $\{\{\Ga_t\}^n\}$ converges to a stationary varifold. 
	
	First, let's consider the varifolds with mass bounded by $4m_0,$ and call the collection $X.$ Metrize it with weak-$\du$ topology induced by Riesz representation. Now, let $X^G$ be the subspace of $G$-varifolds in $X.$ $X^G$ is closed by construction and thus a compact subset of $X.$ Let $\mv_\infty^G=X^G\cap \sv$ denote the space of $G$-stationary varifolds, which is closed by construction. By our lemma, it is a subset of the set of stationary varifolds $\mv_\infty$. Thus, the distance to $\sv^G$ is a well-defined continuous function on $X$ and thus $X^G$. Now, we can consider the annuli
	\begin{align*}
	\mv_k=\{V\in X^G|2^{-k-1}\le \vd(V,\sv^G) \le 2^{-k+1}\}.
	\end{align*} 
	Then the proof proceeds almost the same as the proof of Proposition 4.1 in \cite{CD}, by adding the appropriate superscript $G$ to denote invariance. In essence, we just replace $\sv$ with $\sv^G,$ $G$-invariant stationary varifolds, and let all the vector fields used in construction to be $G$-invariant. For the partition of unity, we note that it still holds in the closed subspace of $G$-invariant varifolds by Theorem II.2 in \cite{RH}. 
	\section{Existence of $G$-almost minimizing varifolds in $G$-annuli}\label{amv}
	In this section, we will prove the following proposition.
	\begin{prop}\label{exam}(Compare Proposition 2.4 in \cite{DT}.)
		Suppose $\Lambda$ is a family of $G$-sweepouts closed under $G$-homotopies. Then there exists a $G$-invariant function $r:M\to\R_+$ and a min-max sequence $\Ga^k=\Ga_{t_k}^k$ so that
		\begin{itemize}[nosep]
			\item[(1)] $\{\Ga^k\}$ is $G$-a.m. in every $\an\in\ann_{r(x)}(x),x\in M$
			\item[(2)] $\Ga^k\to V$ as $k\to\infty $ for some stationary $G$-varifold $V$
		\end{itemize}
	\end{prop}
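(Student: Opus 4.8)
The plan is to follow the proof of Proposition 2.4 of \cite{DT} --- the combinatorial min-max scheme of Almgren and Pitts in the streamlined form of \cite{CD}, and whose equivariant spirit for finite groups appears in \cite{DK} --- replacing geodesic balls and annuli throughout by the tubes $B^G_\rh(x)$ and the $G$-annuli $\an(x,\tau,t)$, and requiring every competitor family and every ambient isotopy to be $G$-equivariant. One starts from a minimizing sequence $\{\{\Ga_t\}^k\}\s\Lambda$ and, invoking Proposition \ref{esv}, may assume at the outset that some min-max subsequence already converges to a $G$-stationary --- hence, by Corollary \ref{gst}, stationary --- varifold. The goal is then to produce, after finitely many $G$-homotopies induced by ambient $G$-isotopies, a new minimizing sequence together with a $G$-invariant radius function $r:M\to\R_+$ such that every min-max subsequence is $\e_k$-$G$-almost minimizing in each $\an\in\ann_{r(x)}(x)$ ($x\in M$) for some $\e_k\to0$; one then re-extracts a min-max subsequence and checks that conclusion $(2)$ survives. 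Note that the $\Ga^k$ may fail to be smooth on finitely many orbits, but this is already built into Definition \ref{swpt}, so it causes no difficulty.

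The structural observation that makes the transfer work is that all the relevant geometry descends to the compact metric space $M/G$. Since $r$ is $G$-invariant it factors through $\ov r:M/G\to\R_+$, and the tubes are preimages of metric balls and annuli, $B^G_\rh(x)=\pi_G\m(B_\rh([x]))$ and $\an(x,\tau,t)=\pi_G\m\big(B_t([x])\sm\ov B_\tau([x])\big)$, so that the nesting $\ov B^G_\tau(x)\s B^G_t(x)$ and the quantities $d_G$, $\diam_G$ introduced in Section \ref{term} are simply the corresponding notions on $M/G$. Hence the covering and bookkeeping combinatorics of \cite{CD,DT} --- choose a finite family of concentric $G$-annuli covering $M$ with controlled overlap of their enlargements, order them, and deform inside them one (or a non-interfering batch) at a time --- become literally the same combinatorics carried out in $M/G$, which is legitimate because $M/G$ is a compact metric space (in fact an Alexandrov space, as an isometric quotient of a compact Riemannian manifold) of finite Hausdorff dimension. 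Crucially, any deformation supported in a $G$-annulus and generated by a $G$-vector field is automatically a $G$-isotopy fixing the complement of that $G$-annulus, so the entire scheme stays inside the class of $G$-sweepouts and $G$-homotopies; in contrast with Section \ref{stv}, no averaging is needed here.

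With this dictionary I would run the contradiction argument of Section 3 of \cite{DT}. Suppose no min-max subsequence of the sequence at hand is $G$-almost minimizing in small $G$-annuli. Then along a subsequence each $\Ga^k=\Ga^k_{t_k}$ admits, in some $\an\in\ann_{r(x_k)}(x_k)$, a one-parameter family of boundaries of $G$-open sets satisfying $(a.1)$--$(a.4)$; since the definition of $G$-almost minimality tests only $G$-deformations, these competitor families are already $G$-invariant, and no averaging is required to stay equivariant. Using compactness of $M/G$ one extracts finitely many ``bad'' $G$-annuli, then organizes them and their area-decreasing $G$-deformations over a subdivision of the parameter interval --- exactly as in \cite{CD,DT}, so that the intermediate $n$-areas rise by at most a controlled amount while the $n$-area inside each processed $G$-annulus drops by a definite amount --- thereby producing a $G$-sweepout $G$-homotopic to $\{\Ga_t\}^k$ with $\F<m_0(\Lambda)$, contradicting minimality. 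Consequently some min-max subsequence is $G$-almost minimizing in small $G$-annuli; applying Proposition \ref{esv} once more, in the compatible fashion of \cite{DT} so that the pulling-tight does not destroy $G$-almost minimality in small $G$-annuli, yields $(2)$ and completes the proof.

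The only genuinely new input, as against the verbatim bookkeeping, is twofold. First, one must check that the tubes $B^G_\rh(x)$ enjoy, for $\rh$ below a $G$-invariant threshold, the nesting and bounded-multiplicity properties demanded by the covering argument \emph{near the singular and exceptional orbits}, where $M/G$ is not a manifold; this is dispatched by working throughout with $B^G_\rh(x)=\pi_G\m(B_\rh([x]))$ and appealing only to the metric (Alexandrov) structure of $M/G$. Second, one must verify that every interpolation written down in the combinatorial step remains $G$-equivariant, which is immediate since only $G$-vector fields supported in $G$-annuli and $G$-invariant competitor families ever enter. I expect the first point --- transplanting the covering combinatorics of \cite{CD} onto the singular quotient $M/G$ --- to be the main obstacle, though still an essentially routine one; once everything is phrased in $M/G$, the area bookkeeping and the passage to the stationary limit are formally identical to \cite{DT}.
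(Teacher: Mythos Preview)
Your plan --- transplant the De Lellis--Tasnady scheme to $M/G$, replacing balls and annuli by $G$-tubes and $G$-annuli and requiring every isotopy to be $G$-equivariant --- is exactly the paper's approach, and your observation that the covering combinatorics live on the compact metric space $M/G$ is the right structural point.

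One caution, however: your third paragraph elides a step that the paper (following \cite{DT}) keeps explicit, and as written your contradiction would not close. The argument is not run directly against ``a.m.\ in small $G$-annuli''. Rather, one first proves an equivariant Almgren--Pitts combinatorial lemma (the paper's Proposition~\ref{apc}) asserting $1/N$-$G$-a.m.\ in every \emph{pair} $(U^1,U^2)\in\co$, where $\co$ is the class of pairs of $G$-open sets with $d_G(U^1,U^2)\ge 4\min\{\diam_G U^1,\diam_G U^2\}$. The pair device is essential: failure of a.m.\ in a \emph{pair} means failure in \emph{both} members, giving two $d_G$-separated regions per near-maximal slice in which Lemma~\ref{free} can splice competing deformations without interference. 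Your formulation ``each $\Ga^k$ admits a competitor in some single annulus; extract finitely many bad $G$-annuli and deform'' furnishes only one bad region per slice, and a single localized area drop cannot be propagated across the parameter interval to beat $m_0(\Lambda)$. Only after Proposition~\ref{apc} is in hand does one feed in the specific pairs $(B^G_\rho(x),\,M\setminus\ov B^G_{9\rho}(x))\in\co$ and run a ball-versus-complement dichotomy to manufacture $r$; note in particular that $r$ is not produced by a covering argument and need not be bounded below --- in the paper's case~(b) it degenerates toward a single limit orbit. Since you defer to \cite{DT} for details this is recoverable, but the pair structure and the subsequent dichotomy are the mechanism that produces $r$, not optional bookkeeping.
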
 
	This idea of the proof is the same as Section 3 of \cite{DT}. However, we will need to make some technical amendments.
	\subsection{$G$-almost minimizing varifolds}
	Before coming to the proof, we define the basic notions needed.
	\begin{defn}(Compare Definition 3.2 in \cite{DT}.)
		For two $G$-open sets $U^1,U^2$, a $G$-generalized hypersurface is said to be $\e$-$G$-a.m. in $(U^1,U^2)$ if it is $\e$-$G$-a.m. in at least one of the two open sets. We define $\co$ to be the collection of pairs $(U^1,U^2)$ of $G$-open sets with 
		\begin{align*}
		d_G(U^1,U^2)\ge 4\min\{\diam_G( U^1),\diam_G( U^2)\}.
		\end{align*}
	\end{defn}
	This definition differs from Definition 3.2 in \cite{DT} in that we consider both diameter and distance on the quotient $M/G$ instead of on $M.$ This shift is essential because otherwise there might be too few sets in $\co.$ We owe this idea to \cite{DK}. Also, $d_G$ is not Hausdorff distance on closed sets. It is just the infimum of the distance between pairs of points in the corresponding sets.
	
	The following simple lemma utilizing only the metric space property will be of great importance later.
	\begin{lem}(Compare Lemma 3.3 in \cite{DT}.)
		If $(U^1,U^2)$ and $(V^1,V^2)$ satisfy
		\begin{align*}
		d_G(U^1,U^2)\ge 2\min\{\diam_G(U^1),\diam_G(U^2)\},\\
		d_G(V^1,V^2)\ge 2\min\{\diam \pi_G(V^1),\diam_G( V^2)\},
		\end{align*}
		then there exist $i,j\in\{1,2\}$ so that $d_G(U^i,V^j)>0$ and thus $d>0$ if $U^i,V^j$ are $G$-sets.
	\end{lem}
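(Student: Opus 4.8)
The plan is to set aside the varifold context entirely and argue inside the complete metric space $(M/G,d_{M/G})$, reducing the statement to an elementary triangle-inequality estimate. The one thing to keep in mind throughout is that $d_G$ is not a metric on $G$-sets but merely the point-set distance between their projections, so it obeys no triangle inequality of its own; every comparison must be pushed down to honest points of $M/G$.

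First I would normalise and record a positivity fact. We may assume all four sets are nonempty, for otherwise some $d_G(U^i,V^j)=+\infty$ and there is nothing to prove. Relabel within each pair so that $\diam_G(U^1)\le\diam_G(U^2)$ and $\diam_G(V^1)\le\diam_G(V^2)$, and set $a=\diam_G(U^1)$, $b=\diam_G(V^1)$; the hypotheses then read $d_G(U^1,U^2)\ge 2a$ and $d_G(V^1,V^2)\ge 2b$. Since $\pi_G$ is an open map, $\pi_G(U^1)$ and $\pi_G(V^1)$ are nonempty open subsets of $M/G$; and because the action is non-transitive (Lemma \ref{l0.2}), $M/G$ is a connected metric space with more than one point, hence has no isolated points, and a nonempty open subset therefore contains two distinct points, so $a>0$ and $b>0$.

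The core estimate is: if $A,B,C$ are $G$-sets with $d_G(A,C)=d_G(B,C)=0$, then $d_G(A,B)\le\diam_G(C)$. Indeed, given $\e>0$ pick $x\in\pi_G(A)$, $y\in\pi_G(B)$ and $z_1,z_2\in\pi_G(C)$ with $d_{M/G}(x,z_1)<\e$ and $d_{M/G}(y,z_2)<\e$; then $d_{M/G}(x,y)\le d_{M/G}(x,z_1)+d_{M/G}(z_1,z_2)+d_{M/G}(z_2,y)<2\e+\diam_G(C)$, and letting $\e\to 0$ gives the claim. Now I argue by contradiction: suppose $d_G(U^i,V^j)=0$ for all $i,j\in\{1,2\}$. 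The estimate applied with $(A,B,C)=(U^1,U^2,V^1)$ gives $d_G(U^1,U^2)\le b$, and with $(A,B,C)=(V^1,V^2,U^1)$ gives $d_G(V^1,V^2)\le a$. Together with the hypotheses this forces $2a\le b$ and $2b\le a$, hence $4a\le 2b\le a$, i.e. $3a\le 0$, contradicting $a>0$. Thus $d_G(U^i,V^j)>0$ for some $i,j$. Finally, since $d_{M/G}([p],[q])\le d_M(p,q)$ — the quotient distance being an infimum over orbits — the ambient distance $d(U^i,V^j)$ in $M$ is at least $d_G(U^i,V^j)>0$, which is the last assertion. I expect no genuine obstacle in this argument; the only care required is the two bookkeeping points above, namely carrying out the triangle inequality at the level of points of $M/G$ rather than of the sets, and invoking non-transitivity to be sure the relevant diameters $a,b$ are strictly positive and not zero.
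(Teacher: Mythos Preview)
Your argument is correct and follows essentially the same route as the paper's: both proceed by contradiction, deriving from $d_G(U^i,V^j)=0$ for all $i,j$ the two bounds $d_G(U^1,U^2)\le\diam_G(V^1)$ and $d_G(V^1,V^2)\le\diam_G(U^1)$ (you via an abstracted ``core estimate'' with $\e$-approximation, the paper by using compactness to realise the zero distances at coincident closure points and then chaining a single string of inequalities), and then combining with the hypotheses to force $\diam_G(U^1)\le 0$. Your explicit verification that $a>0$ --- invoking openness of the $U^i,V^j$ together with the fact that the connected space $M/G$ has no isolated points --- is a point the paper leaves tacit (it simply writes ``which is a contradiction''); note that this openness is not part of the lemma's stated hypotheses but is the ambient assumption coming from $\mathcal{CO}^G$, and without some such positivity the final step fails in both proofs.
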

	\begin{proof}
		Since $M$ is compact, we can find $x^i_j\in\ov{\pi_G(U^i)},y^j_i\in \ov{\pi_G(V^j)}$ realizing the distance $d_G(U^i,V^j)$. Note that $d_G(x^i_{j_1},x^i_{j_2})\le \diam_G(U^i)$ since the two points lie in $\ov{\pi_G(U^i)}.$ Suppose the conclusion doesn't hold, then $d_G(U^i,V^j)=0$ for all $i,j$. Thus, we can choose $x^i_j$ to coincide with $y^j_i.$ Without loss of generality, suppose $\diam_G(U^1)\le \diam \pi_G(U^2)$ and $\diam \pi_G(V^1)\le \diam \pi_G(V^2).$ This gives\begin{align*}
		\diam_G(U^1)&\ge d_G(x^1_1,x^1_2)=d_G(y^1_1,y^2_1)\ge d_G(V^1,V^2)\\&\ge 2 \diam \pi_G(V^1)\ge 2d_G(y^1_1,y^1_2)= 2d_G(x^1_1,x^2_1)\\&\ge 2d_G(U^1,U^2)\ge 4\diam_G(U^1),
		\end{align*} which is a contradiction.
	\end{proof}
	The most essential ingredient for the proof of Proposition \ref{exam} is the following Almgren-Pitts combinatorial lemma.
	\begin{prop}\label{apc}
		(Almgren-Pitts combinatorial lemma) Let $\Lambda$ be a $G$-homotopically closed family of $G$-sweepouts. There exists a min-max sequence $\{\Ga^N\}=\{\pd\Om_{t_k(N)}^{k(N)}\}$ so that
		\begin{itemize}[nosep]
			\item[(1)] 	$\Ga^N$ converges to a stationary varifold;
			\item[(2)]	For any $(U^1,U^2)\in\co,$ $\Ga^N$ is $1/N$-$G$-a.m. in $(U^1,U^2)$ for $N>N(U^1,U^2)$ large enough, with $N(U^1,U^2)>0$ depending on $(U^1,U^2).$
		\end{itemize}
	\end{prop}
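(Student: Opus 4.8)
The plan is to reproduce, in the equivariant category, the combinatorial construction of Section~3 of \cite{DT} (a continuous incarnation of the Almgren--Pitts scheme), with two modifications threaded through it: every competing deformation is generated by a $G$-vector field, and all separation/diameter bookkeeping is carried out on the quotient metric space $M/G$ rather than on $M$. Conclusion~(1) requires no new argument: applying Proposition~\ref{esv} to $\Lambda$ we may assume from the outset that the minimizing sequence $\{\{\pd\Om_t\}^k\}$ has been pulled tight, so that \emph{every} min-max subsequence extracted from it converges to a stationary $G$-varifold. It therefore remains only to select, from this pulled-tight minimizing sequence, a min-max sequence realising~(2).

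For (2) I would argue by contradiction along the usual lines. If no reindexing of the minimizing sequence is eventually $1/N$-$G$-a.m. in the pairs of $\co$ with small threshold, then for each $N$ every slice $\pd\Om^k_t$ whose area is within, say, $1/(10N)$ of $m_0(\Lambda)$ fails to be $1/N$-$G$-a.m. in some pair $(U^1,U^2)\in\co$; by the definition of $\e$-$G$-a.m. in a pair, failing there means failing in $U^1$ \emph{and} in $U^2$, so in each of these $G$-open sets there is a one-parameter family of boundaries of $G$-open sets, induced by $G$-vector fields supported in that set, that strictly decreases $\hm^n(\pd\Om^k_t)$ by $1/N$ while never raising it by more than $1/(8N)$. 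The point of the defining inequality $d_G(U^1,U^2)\ge 4\min\{\diam_G U^1,\diam_G U^2\}$ of $\co$ --- read on $M/G$ --- is precisely that the metric lemma above applies verbatim (since $M/G$ is a complete Hausdorff metric space), so that, following the combinatorial selection of \cite{DT}, from the finitely many bad pairs relevant at level $N$ one can choose one $G$-open set per pair with the chosen $G$-sets pairwise disjoint; the associated $G$-deformations then have disjoint supports and may be superposed without interference, the result still being a family of boundaries of $G$-open sets. Note that the averaging device of Lemma~\ref{cute} plays no role here, since the admissible competitors are $G$-families by definition.

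The technical core --- and the step I expect to be the main obstacle --- is the equivariant interpolation that turns these slice-by-slice competitors into a single \emph{continuous} $G$-generalized family: one must patch the competing deformations together with a partition of unity in the parameter $t$ and the interpolation lemmas for families of open sets of \cite{DT}, verifying both that $G$-invariance of every open set and vector field involved is preserved throughout --- which it is, because the interpolating moves (small ambient $G$-isotopies and level sets of $G$-invariant functions) never leave $\mathfrak{X}^G(M)$ or the class of $G$-open sets --- and that the area bookkeeping assembles correctly, the $1/(8N)$-control preventing any modified slice from overshooting while the $1/N$-gains on the almost-maximal slices pull the maximum of $\hm^n$ over the new family below $m_0(\Lambda)$. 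Since the new family is obtained from $\{\pd\Om^k_t\}$ through $G$-isotopies it again lies in $\Lambda$ by $G$-homotopic closedness, and its $\F$-value is eventually $<m_0(\Lambda)$, contradicting the definition of $m_0(\Lambda)$. A final diagonal argument over a countable cofinal subfamily of $\co$ then extracts the single min-max sequence $\{\Ga^N\}=\{\pd\Om_{t_k(N)}^{k(N)}\}$, the threshold $N(U^1,U^2)$ being the stage of the enumeration at which the pair $(U^1,U^2)$ enters; combined with the pulling-tight of Proposition~\ref{esv} this yields both (1) and (2).
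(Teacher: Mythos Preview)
Your proposal is correct and takes essentially the same approach as the paper, which simply records that the proof is ``exactly the same as proof of Proposition~3.4 in \cite{DT}'' once Lemma~\ref{free} is substituted for Lemma~3.1 of \cite{DT}; the ``technical core'' you single out --- the equivariant interpolation that turns slice-by-slice $G$-competitors into a continuous $G$-family while keeping the area bookkeeping intact --- is precisely what Lemma~\ref{free} supplies. One small remark: your closing ``diagonal argument over a countable cofinal subfamily of $\co$'' is superfluous (and since $\co$ is uncountable, ``cofinal'' would itself need justification); the contradiction argument of \cite{DT} already produces, for each sufficiently large $k$, a big slice that is $1/k$-$G$-a.m.\ in \emph{every} pair of $\co$ simultaneously, so the min-max sequence is extracted directly with no diagonalization and the threshold $N(U^1,U^2)$ could in fact be taken uniform.
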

	Proof of Proposition \ref{apc} is exactly the same as proof of Proposition 3.4 in \cite{DT}. We only need to substitute Lemma \ref{free} below for Lemma 3.1 in \cite{DT} and adding $G$- in front of objects. We will omit the proof.
	
	\begin{proof}[Proof of Proposition \ref{exam}]
		The proof has essentially the same idea. We show that a subsequence of the $\{\Ga^k\}$ in Proposition \ref{apc} satisfies the requirements of Proposition \ref{exam}. 
		
		By the existence of equivariant tubular neighborhoods \cite{GB}, for any $z\in M,$ there exists a nonzero $\rh_G(z)$ so that for all $0<\rh\le \rh_G(z)$, $B^G_{\rh}(z)$ is a well-defined $G$-invariant tubular neighborhood around $x.$ Note that we cannot have uniform lower bound on $\rh_G(z)$. For example, if there exists a fixed point $z^\ast$ of $G$, then for points closer and closer to of $z^\ast,$ the allowable radii of invariant tubes have to be smaller and smaller. However, apparently we can have a uniform upper bound on $\rh_G$ by injectivity radius. 
		
		For any $x\in M$, we fix $k\in\N$ and some choice of radius $0<\rh(x)<\frac{1}{9}\rh_G(x)$. (The exact choice of $\rh(x)$ does not matter. We only need it to be positive. Moreover, it can be made $G$-invariant by pushing forward along orbits.) For all $x\in M$, we have $(B_\rh^G,M\sm\ov B_{9\rh}^G(x))\in\co$ by construction. By Proposition \ref{apc}, for $k$ large, $\Ga^k$ is $1/k$-$G$-almost minimizing in either $B^G_\rh(x)$ or $M\sm \ov{B}_{9\rh}^G(x).$ Consequently, for our choice $\rh(x)>0,$ we have
		\begin{itemize}[nosep]
			\item[(a)] either $\{\Ga^k\}$ is $1/k$-$G$-a.m. in $B^G_{\rh(y)}(y)$ for every $y\in M.$
			\item[(b)] or there exists a subsequence $\{\Ga^k\}$ (not relabeled) and a sequence $\{x^k\}\s M$ such that $\Ga^k$ is $1/k$-$G$-a.m. in $M\sm\ov{B}_{9\rh(x^k)}^G(x^{k})$.
			
		\end{itemize}
		If for some choice of radius $\ai\rh(x)>0$ with $\ai\in(0,1],$ (a) holds, then we're fine. If this is not the case, then we can find a subsequence of $\{\Ga^k\}$ (not relabeled) and a collection of points $\{x_j^k\}_{j,k\in \N_+}\s M$ so that
		\begin{itemize}[nosep]
			\item[(i)] for any fixed $j,$ $\Ga^k$ is $1/k$-$G$-a.m. in $M\sm\ov{B}^G_{\rh(x_j^k)/j}(x_j^k)$ for $k$ large enough,
			\item[(ii)] $x_j^k\xrightarrow{d_G} x_j$  for $k\to\infty,$ i.e., $G.x_j^k$ converges to $G.x_j$ in the quotient space $M/G,$ and $x_j\xrightarrow{d_G} x$ for $j\to\infty.$
			
		\end{itemize}
		\begin{claim}
			($\ast$) For any $J>\frac{1}{\rh(x)}$, there exists $K_J$ so that $\Ga^k$ is $G$-$1/k$-a.m. in $M\setminus\ov{B}^G_{1/J}(x)$ for all $k\ge K_J.$ 
		\end{claim}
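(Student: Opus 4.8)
The plan is to deduce $(\ast)$ from the $G$-almost-minimizing property on tube-complements around the points $x_j^k$ already recorded in (i), together with the monotonicity remark at the end of the definition of $G$-almost minimizing: if $\Ga^k$ is $\e$-$G$-a.m.\ in a $G$-open set $U$ and $V\s U$ is again a $G$-open set, then $\Ga^k$ is $\e$-$G$-a.m.\ in $V$. Concretely, I will fix a single index $j_0$ and show that for all large $k$ the closed tubes satisfy
\begin{align*}
\ov B^G_{\rh(x_{j_0}^k)/j_0}(x_{j_0}^k)\s \ov B^G_{1/J}(x),
\end{align*}
equivalently $M\sm\ov B^G_{1/J}(x)\s M\sm\ov B^G_{\rh(x_{j_0}^k)/j_0}(x_{j_0}^k)$. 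Granting this, (i) with $j=j_0$ says $\Ga^k$ is $1/k$-$G$-a.m.\ in the $G$-open set $M\sm\ov B^G_{\rh(x_{j_0}^k)/j_0}(x_{j_0}^k)$ for $k$ large, and the monotonicity remark upgrades this to $1/k$-$G$-a.m.\ in the $G$-open subset $M\sm\ov B^G_{1/J}(x)$, which is exactly $(\ast)$.

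To produce the tube inclusion I work in the quotient metric space $M/G$, using that $y\in\ov B^G_{\rh}(z)$ if and only if $d_{M/G}([y],[z])\le\rh$, so that $d_G$ between orbits is the quotient distance and the triangle inequality is available. Recall $\rh$ is bounded above on $M$, say $\rh\le R_0$, since $\rh<\tfrac19\rh_G$ and $\rh_G$ is bounded by the injectivity radius (as noted after the choice of $\rh(x)$). First I choose $j_0$: by (ii), $d_{M/G}([x_j],[x])\to 0$ as $j\to\infty$, so I may take $j_0$ so large that simultaneously $d_{M/G}([x_{j_0}],[x])<\tfrac{1}{2J}$ and $R_0/j_0<\tfrac{1}{8J}$. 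With $j_0$ frozen, I then choose $K_J$ so large that for all $k\ge K_J$ one has $d_{M/G}([x_{j_0}^k],[x_{j_0}])<\tfrac{1}{4J}$ (possible by $x_{j_0}^k\to x_{j_0}$ in $d_G$ from (ii)) and, enlarging $K_J$ if needed, so that the conclusion of (i) holds for the index $j_0$ and all $k\ge K_J$. Then for $k\ge K_J$ and any $y\in\ov B^G_{\rh(x_{j_0}^k)/j_0}(x_{j_0}^k)$,
\begin{align*}
d_{M/G}([y],[x])&\le d_{M/G}([y],[x_{j_0}^k])+d_{M/G}([x_{j_0}^k],[x_{j_0}])+d_{M/G}([x_{j_0}],[x])\\
&<\frac{\rh(x_{j_0}^k)}{j_0}+\frac{1}{4J}+\frac{1}{2J}\le\frac{R_0}{j_0}+\frac{3}{4J}<\frac{1}{J},
\end{align*}
so $y\in\ov B^G_{1/J}(x)$, establishing the inclusion, and hence $(\ast)$.

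This is essentially bookkeeping with the triangle inequality in $M/G$, and I do not anticipate a genuine obstacle. The points deserving a little care are the identification of the closed tubes $\ov B^G_{\rh}(z)$ with the metric balls $\pi_G^{-1}(\ov B_{M/G}([z],\rh))$ in the quotient (so that quantitative $d_G$-estimates become honest set inclusions), the boundedness of $\rh$ that makes $\rh(x_{j_0}^k)/j_0$ uniformly small in $k$, and the order of the quantifiers: $j_0$ must be chosen and fixed \emph{before} letting $k\to\infty$, since both the closeness of $x_{j_0}^k$ to $x_{j_0}$ and the validity of (i) are only asserted for $k$ large depending on $j_0$. The hypothesis $J>1/\rh(x)$ is not itself needed for $(\ast)$; it merely ensures that $\ov B^G_{1/J}(x)$ is an embedded tubular neighborhood, which is used when $(\ast)$ is applied afterward.
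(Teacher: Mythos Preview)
Your proof is correct and follows essentially the same approach as the paper: fix a large index $j_0$ so that $x_{j_0}$ is close to $x$ in $M/G$ and the tube radii $\rh(\cdot)/j_0$ are uniformly small, then take $k$ large so that $x_{j_0}^k$ is close to $x_{j_0}$ and (i) applies, and conclude via the triangle inequality in $M/G$ that $\ov B^G_{\rh(x_{j_0}^k)/j_0}(x_{j_0}^k)\s\ov B^G_{1/J}(x)$, whence monotonicity of the $\e$-$G$-a.m.\ property gives $(\ast)$. The only differences from the paper are cosmetic (the paper splits $1/J$ into three pieces of size $1/(3J)$ rather than your $1/(8J),\,1/(4J),\,1/(2J)$), and your side remarks on the identification of tubes with quotient metric balls and on the role of the hypothesis $J>1/\rh(x)$ are accurate.
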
This can be done by choosing $j$ with $d_G(x_j,x)<1/(3J),$ and more importantly
		$$\sup_{z\in M}\rh^G(z)/j\le\frac{1}{3J}.$$ Then take $k$ large enough with $d_G(x_j^k,x_j)<1/(3J)$ and $\Ga^k$ $1/k$-$G$-a.m. in $M\sm\ov{B}_{\rh(x_j^k)/j}^G(x_j^k).$ Note that $$\frac{\rh(x_j^k)}{j}+d_G(x_j^k,x_j)+d_G(x_j,x)<\frac{1}{J},$$ so we have $M\sm \ov{B}^G_{1/J}(x)\s  M\sm\ov{B}_{\rh(x_j^k)/j}^G(x_j^k).$ This proves the claim. Thus, for $y\in M\sm\{x\}$, we can simply choose $r(y)<\rh^G(y)$ so that $B^G_{r(y)}\s\s M\sm\{x\}$. By construction we have that $\an_{r(z)}\s\s M\sm\{x\}$ for any $\an\in \ann_{r(z)}(z)$ with $z\in M\sm \{x\} $. By ($\ast$), this definition of $r(y)$ satisfies the requirements in the proposition. This defines $r$ for $M\sm\{x\}$. For $x$ itself, note that as long as $r(x)<\rh(x),$ then $\Ga^k$ would be $1/k$-$G$-a.m. for $k$ large enough in any annulus around $x$ by ($\ast$), since the annulus will be contained in a complement of an invariant tube. 
	\end{proof}
	For the proof of Proposition \ref{apc}, we need an important lemma that will help us construct dynamic competitors and glue them to get contradictions in our omitted proof of Proposition \ref{apc}
	\begin{lem}\label{free}(Compare Lemma 3.1 in \cite{DT}.)
		Let $U\s\s\ U'\s M$ be two $G$-open sets and $\{\pd \Xi_t\}_{t\in[0,1]}$ be a $G$-sweepout. For $\e>0$, and $t_0\in [0,1]$, assume $\{\pd \Om_s\}_{s\in [0,1]}$ is a one-parameter family of $G$-generalized hypersurfaces satisfying (a.1), (a.2), (a.3), and (a.4), with $\Om=\Xi_{t_0}$. Then there is $\eta>0$ such that the following holds for every $a,b,a',b'$ with $t_0-\eta\le a<a'<b'<b\le t_0+\eta.$ 
		
		We can find a competitor $G$-sweepout $\{\pd \Xi'_t\}_{t\in [0,1]}$ so that
		\begin{itemize}[nosep]
			\item[(a)] $\Xi_t=\Xi'_t$ for $t\in [0,a]\cup [b,1]$ and $\Xi_t\sm U'=\Si'_t\sm U'$ for $t\in (a,b)$;
			\item[(b)] $\hm^n(\pd \Xi'_t)\le \hm^n(\pd\Xi_t)+\frac{\e}{4},$ for every $t$;
			\item[(c)] $\hm^n(\pd \Xi'_g)\le \hm^n(\pd \Xi_t)-\frac{\e}{2}$ for $t\in(a',b').$
			\item[(d)] $\pd \Xi'_t$ is $G$-homotopic to $\{\pd \Xi_t\}.$
		\end{itemize}
	\end{lem}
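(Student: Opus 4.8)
The plan is to follow the strategy of Lemma 3.1 in \cite{DT}, adapting every construction to the $G$-invariant setting. The key point is that all the competitors we build must be $G$-sweepouts, so every deformation vector field we use along the way must be $G$-invariant; fortunately the ambient $G$-isotopies (and gradient flows of $G$-invariant functions) give us plenty of these, and the averaging construction of Lemma~\ref{cute} lets us symmetrize any deformation without increasing its norm or changing the first variation. First I would fix a $G$-invariant cutoff function $\vp:M\to[0,1]$ that is $1$ on $\ov U$ and supported in $U'$ (such a function exists by pulling back a cutoff from the quotient $M/G$, which is a metric space, and averaging if necessary). The given one-parameter family $\{\pd\Om_s\}$ realizes the drop of mass by $\e$ inside $U$ while staying fixed outside $U$; the job is to graft this family onto the sweepout $\{\pd\Xi_t\}$ near the parameter value $t_0$, in a parameter window $(a,b)$, without spoiling (s0)--(s3), (sw1), (sw3), and keeping everything $G$-invariant.

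The key steps, in order: (1) Using the continuity (s2), (s3) of the sweepout $\{\pd\Xi_t\}$ at $t=t_0$ and the fact that $\Xi_{t_0}=\Om$, choose $\eta>0$ small enough that for $|t-t_0|\le\eta$ the hypersurface $\pd\Xi_t$ is $C^1$-close to $\pd\Om$ inside $U'$ away from the finite set of orbits $P_{t_0}$; this is where the tube-based neighborhoods and the smooth convergence are used, exactly as in \cite{DT} but with $P_{t_0}$ a finite union of orbits rather than points. (2) Build a $G$-invariant interpolating isotopy: on the window $[a,a']$ interpolate (via an ambient $G$-isotopy supported in $U'$) from $\pd\Xi_t$ to a surface that agrees with $\pd\Om$ inside $U$ and with $\pd\Xi_t$ outside $U'$; this uses that $\pd\Xi_t$ and $\pd\Om$ are close inside $U'$, so the interpolation costs at most $\e/8$ in mass (this is the source of the $\e/4$ bound in (b), after accounting for the $\e/8$ already allowed in (a.3)). (3) On the core window $[a',b']$, run the given family $\{\pd\Om_s\}$, grafted in through $\vp$ so that outside $U$ it stays equal to $\pd\Xi_t$; by (a.4) the mass inside $U$ drops by $\e$, giving (c) after subtracting the $\e/2$ margin. (4) On $[b',b]$, reverse the interpolation of step (2) to return to $\pd\Xi_t$, and leave $\Xi_t$ untouched for $t\in[0,a]\cup[b,1]$, yielding (a). (5) Verify that the whole two-parameter construction is itself a $G$-generalized family, so $\{\pd\Xi'_t\}$ is $G$-homotopic to $\{\pd\Xi_t\}$, giving (d); here one checks (s0)--(s3) using that each step was a finite concatenation of ambient $G$-isotopies and the given $G$-family $\{\pd\Om_s\}$, whose exceptional orbit sets $P_s$ stay finite.

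I expect the main obstacle to be step (1)--(2): controlling the interpolation cost inside the tube $U'$ while keeping the construction $G$-equivariant and while allowing $P_{t_0}$ to be a finite union of orbits rather than a finite set of points. In \cite{DT} one works in geodesic balls and small annuli where the surfaces are graphs; here the natural neighborhoods are tubes $B^G_\rh(x)$ around orbits, and near a point of $P_{t_0}$ the local picture is an equivariant handle-bundle attachment (as in \cite{AW}), not a simple cell. The fix is to note that the finitely many exceptional orbits in $P_{t_0}$ each have a $G$-invariant tubular neighborhood (existence of equivariant tubular neighborhoods, \cite{GB}) of arbitrarily small $G$-volume, so we may excise small $G$-tubes around $P_{t_0}$, perform the graphical interpolation on the complement where (s3) gives genuine smooth $C^1$ convergence, and absorb the excised pieces into the error term by taking the tubes small; this only costs an arbitrarily small amount of $\hm^n$-mass and preserves $G$-invariance since tubes around orbits are $G$-sets. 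Everything else is a routine equivariant transcription of \cite{DT}, using Lemma~\ref{cute} whenever a vector field needs to be symmetrized and Proposition~\ref{le} as the supply of $G$-invariant functions whose level sets and gradient flows provide the needed $G$-isotopies.
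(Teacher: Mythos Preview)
Your overall strategy is the same as the paper's: follow \cite{DT} Lemma~3.1 and make every construction $G$-invariant. But you gloss over the one place the paper singles out as genuinely nontrivial, and your proposed fix does not actually address it.

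The issue is your step~(2), the ``graphical interpolation.'' In \cite{DT} one fixes normal coordinates $(z,\si)\in (\pd\Xi_{t_0}\cap C)\times(-\de,\de)$ and writes nearby slices $\pd\Xi_t$ as graphs of functions over $\pd\Xi_{t_0}$; the interpolation is then a linear interpolation of graph functions. To make this $G$-equivariant you need that the identification of a tubular neighborhood of $\pd\Xi_{t_0}$ with its normal bundle is itself $G$-equivariant, i.e.\ that $G$ preserves the chosen unit normal $\nu$. A priori $g_\#\nu_p=\pm\nu_{g.p}$, and if some $g$ reverses the normal then a $G$-invariant nearby hypersurface need not be the graph of a $G$-invariant function, so your interpolation would not produce $G$-invariant competitors. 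Neither Lemma~\ref{cute} nor averaging helps here: averaging a graph function against a normal-reversing element kills it, and Lemma~\ref{cute} only concerns the value of the first variation, not the geometry of the deformation. The paper resolves this by observing that the normal-reversing elements would form an index-$2$ subgroup of $G$, which is impossible since $G$ is connected; hence $G$ preserves $\nu$, $G$-invariant normal vector fields correspond to $G$-invariant functions, and the exponential map (which is equivariant) carries the graph construction through. This is exactly where the standing hypothesis that $G$ be connected is used, and it is the main content of the paper's proof; you should make this argument explicit rather than deferring to symmetrization.

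Your discussion of excising small $G$-tubes around the orbits in $P_{t_0}$ is fine and matches the spirit of the paper, and the $G$-invariant cutoff $\vp$ is available via invariant partitions of unity as in \cite{W}. Once the normal-preservation point is in place, the rest of your outline goes through exactly as in \cite{DT}.
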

	\begin{proof}
		The proof is the same as proof of Lemma 3.3 in \cite{DT}. There are several points worth mentioning. First, by \cite{W}, we can find invariant partition of unity subordinate to any $G$-open set. Second, when we fix normal coordinates $(z,\si)\in\pd\Xi_{t_0}\cap C\times (-\de,\de),$ we are actually identifying the trivial normal bundle as the coordinates. In other words, we identify a $G$-invariant tubular neighborhood of $\pd\Xi_{t_0}\cap C$ with the trivial normal bundle of $\Xi_{t_0}.$ This is possible for the following reasons. All boundaries we consider are two-sided and thus naturally orientable with trivial normal bundle. Thus, we can choose a unit normal field $\nu$ well-defined except at finitely many orbits. Since $G$ acts by isometries, $g\pf\nu_p=\pm\nu_{g.p}$. Those $g$ reversing the normal will automatically form an index $2$ subgroup of $G$, which is contradictory to connectedness of $G.$ Thus, $G$ preserves the normal of $\pd\Xi_{t_0}\cap C.$ We can deduce that $G$-vector fields on $\pd\Xi_{t_0}\cap C$ can be identified with $G$-smooth functions on $\pd\Xi_{t_0}\cap C$. Moreover since the exponential map is equivariant, we see that exponentiating any $G$-vector field in the normal bundle with small enough norm would yield a $G$-invariant generalized hypersurface. Using these facts above, the $G$-invariance of our constructions can be readily verified. This index-2 subgroup argument is inspired by a conversation with Professor Robert Bryant.
	\end{proof}
	
	\section{The existence of $G$-invariant replacements}\label{repl}
	This section is dedicated to the proof of the following proposition
	\begin{prop}\label{p2.6}(Compare Proposition 2.6 in \cite{DT}.)
		Let $\{\Ga^j\}$, $V$ and $r$ be as in Proposition \ref{exam}. Fix $x\in M$ and consider an annulus $\an\in\ann_{r(x)}(x).$ Then there exists a $G$-varifold $\ti{V}$, a $G$-sequence $\{\ti{\Ga}^j\}$ and a $G$-function $r':M\to\R_+$ such that
		\begin{itemize}[nosep]
			\item[(a)] $\ti{V}$ is a replacement for $V$ in $\an$ and $\ti{\Ga}^j$ converges to $\ti{V}$ in the sense of varifolds;
			\item[(b)] $\ti{\Ga}^j$ is $G$-a.m. in every $\textnormal{An}^G_{'}\in \ann_{r'(y)}(y)$ with $y\in M$,
			\item[(c)] $r'(x)=r(x).$
		\end{itemize}
	\end{prop}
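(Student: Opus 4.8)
The plan is to mimic Section~4 of \cite{DT} step by step, with every deformation taken $G$-equivariant (so that \autoref{free}, \autoref{cute} and the equivariant tubular-neighbourhood structure of \cite{GB} are available), and to replace the one step where equivariance is not sufficient --- the interior regularity of the replacement --- by an averaging argument over $G$ in the spirit of \cite{BL,HL}. The construction below directly yields only a varifold minimizing among $G$-invariant competitors on small tubes; the averaging upgrades this to minimizing among \emph{all} competitors, which is what the regularity theory feeds on.

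Write $\Ga^j=\pd\Om^j$ and let $\e_j\to0$ be the almost-minimizing parameter of $\{\Ga^j\}$ from \autoref{exam}. For each $j$ let $\mathcal C^j$ be the class of boundaries $\pd\Om$ of $G$-open sets reachable from $\Om^j$ through a one-parameter family of $G$-generalized hypersurfaces obeying (s0)--(s3), (sw1), (sw3), with $\Om_t\sm\an=\Om^j\sm\an$ and $\hm^n(\pd\Om_t)\le\hm^n(\Ga^j)+\e_j/8$ throughout. This class is nonempty ($\pd\Om^j\in\mathcal C^j$ via the constant family) and $\inf_{\mathcal C^j}\hm^n\le\hm^n(\Ga^j)$; choose $\ti\Om^j$ with $\pd\ti\Om^j\in\mathcal C^j$ and $\hm^n(\pd\ti\Om^j)\le\inf_{\mathcal C^j}\hm^n+1/j$, and set $\ti\Ga^j=\pd\ti\Om^j$. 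Then $\ti\Ga^j=\Ga^j$ on $M\sm\an$, $\hm^n(\ti\Ga^j)\le\hm^n(\Ga^j)+1/j$, and the $\ti\Om^j$ have uniformly bounded perimeter, so after passing to a subsequence $\ti\Ga^j\to\ti V$ as varifolds --- a $G$-varifold, the $G$-varifolds being closed --- and $\ti\Om^j\to\ti\Om$ in $L^1$ for some $G$-invariant Caccioppoli set $\ti\Om$. Since $\ti\Ga^j=\Ga^j$ off $\ov\an$ we have $\ti V=V$ there, and by the usual argument of \cite{DT} (inserting the localized family realizing $\ti\Om^j\in\mathcal C^j$ into the minimizing sequence of sweepouts) $\{\ti\Ga^j\}$ is again a min-max sequence, so $\no{\ti V}(M)=\no V(M)$. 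We will take $r'(x)=r(x)$, which gives (c) once (b) is in place.

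For (b) one builds $r'$ just as in \cite{DT}: if $y$ is such that $\ov B^G_{r'(y)}(y)\cap\ov\an=\es$, then $\ti\Ga^j=\Ga^j$ on a $G$-neighbourhood of that tube, so any competing $G$-family for $\ti\Ga^j$ in an annulus of $\ann_{r'(y)}(y)$ is literally one for $\Ga^j$ and $G$-almost minimality is inherited; for annuli $\s\s\an$ the local minimality proved below gives $G$-almost minimality for free; and for annuli about $x$ of outer radius $<r(x)$ one uses that they are concentric with $\an$, so cutting and regluing along a fixed $G$-sphere turns a competing $G$-family for $\ti\Ga^j$ into one for $\Ga^j$ whose area profile is shifted only by the constant $\hm^n(\pd\Om^j)-\hm^n(\pd\ti\Om^j)$, which can be absorbed into an enlarged parameter $\ov\e_j\to0$. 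This yields (b) with $r'(x)=r(x)$. In particular $\ti V$ is $G$-almost minimizing in arbitrarily small $G$-annuli about every point, so $\no{\de\ti V}_G=0$; thus $\ti V$ is $G$-stationary, hence stationary by \autoref{cute} and \autoref{gst}.

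There remains the assertion that $\ti V\ip\an$ is a stable minimal generalized hypersurface, and here the averaging enters. Following \cite{DT}, Section~4, with \autoref{free} in place of their interpolation lemma, one arranges that for $j$ large $\ti\Om^j$ minimizes $\hm^n(\pd\,\cdot\,)$ among $G$-invariant Caccioppoli sets agreeing with $\ti\Om^j$ outside a small enough $G$-tube $B^G_\rh(y)\s\s\an$, with $\rh=\rh(y)>0$ independent of $j$; this is the delicate step, and it must be carried out with $G$-tubes rather than geodesic balls, exactly as in the proof of \autoref{exam}. To remove the $G$-invariance restriction, take any Caccioppoli set $W$ with $W\triangle\ti\Om^j\s\s B^G_{\rh'}(y)$ for some $\rh'<\rh$. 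Since $B^G_{\rh'}(y)$ and $\ti\Om^j$ are $G$-invariant, $(g\cdot W)\triangle\ti\Om^j\s\s B^G_{\rh'}(y)$ for all $g\in G$; the function $u(p)=\int_G\mathbf{1}_{g\cdot W}(p)\,d\mu(g)$ is $G$-invariant by bi-invariance of $\mu$, lies in $BV$ with $|Du|(M)\le\int_G\hm^n(\pd(g\cdot W))\,d\mu(g)=\hm^n(\pd W)$, and equals $\mathbf{1}_{\ti\Om^j}$ on $M\sm B^G_{\rh'}(y)$. By the coarea formula for $BV$ functions, $\int_0^1\hm^n(\pd\{u>\lam\})\,d\lam=|Du|(M)\le\hm^n(\pd W)$, so for some $\lam\in(0,1)$ the set $\Om^\ast=\{u>\lam\}$ is $G$-invariant, satisfies $\Om^\ast\triangle\ti\Om^j\s\s B^G_\rh(y)$, and has $\hm^n(\pd\Om^\ast)\le\hm^n(\pd W)$. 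Applying the $G$-invariant minimality of $\ti\Om^j$ to the competitor $\Om^\ast$ gives $\hm^n(\pd\ti\Om^j)\le\hm^n(\pd\Om^\ast)\le\hm^n(\pd W)$, so $\ti\Om^j$ is, for $j$ large, area-minimizing among all Caccioppoli sets agreeing with $\ti\Om^j$ outside $B^G_{\rh'}(y)$ for every $\rh'<\rh$ --- that is, a local perimeter minimizer in $B^G_\rh(y)$. Passing to the $L^1$-limit, $\ti\Om$ is a local perimeter minimizer in $\an$, so by the De Giorgi--Federer regularity theorem $\pd^\ast\ti\Om$ is smooth off a closed set of Hausdorff dimension $\le n-7$, $\ti V\ip\an=|\pd^\ast\ti\Om|$ has multiplicity one, and it is stable because minimizing. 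Thus $\ti V$ is a replacement for $V$ in $\an$, which together with the preceding paragraphs establishes (a). The main obstacle is precisely this averaging step and the verification that $\Om^\ast$ is an admissible competitor (coinciding with $\ti\Om^j$ off the $G$-tube, with controlled total perimeter); the appeal to \autoref{free}, and hence to the connectedness of $G$ and the orientability of $M$, is what keeps the competing families equivariant throughout.
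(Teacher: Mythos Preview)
Your overall strategy --- follow \cite{DT} Section~4 equivariantly and supply interior regularity via Lawson--Fleming averaging --- is exactly the paper's, and your averaging step (build a $G$-invariant competitor $\Om^\ast$ from an arbitrary $W$ by slicing $u=\int_G\mathbf 1_{g\cdot W}\,d\mu$) is essentially the content of Proposition~\ref{keyprop} and Corollary~\ref{gta}. The arguments for (b), (c) and the stationarity of $\ti V$ via Corollary~\ref{gst} are also fine.

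There is, however, a structural gap in the regularity step. You pick $\ti\Om^j$ directly as a $1/j$-\emph{near}-minimizer in $\mathcal C^j$ and then assert that, for $j$ large, $\ti\Om^j$ \emph{minimizes} perimeter among $G$-invariant competitors in a tube $B^G_\rh(y)$ with $\rh=\rh(y)$ independent of $j$. Neither claim is justified. The analogue of Lemma~\ref{l4.1} only says that any smooth $G$-competitor supported in a small tube lies in the class; combined with $\hm^n(\pd\ti\Om^j)\le\inf_{\mathcal C^j}\hm^n+1/j$ this yields only that $\ti\Om^j$ is a $1/j$-near-minimizer in the tube, not a minimizer. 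Moreover the tube radius in Lemma~\ref{l4.1} depends on $j$ through the monotonicity estimate for the $j$-th varifold, so your ``$\rh$ independent of $j$'' is not available. Consequently there is no reason for $\pd\ti\Om^j$ itself to be regular, and the identification $\ti V\ip\an=|\pd^\ast\ti\Om|$ with multiplicity one is unsupported: nothing prevents two sheets of $\pd\ti\Om^j$ from cancelling in the Caccioppoli limit $\ti\Om$ while surviving with multiplicity two in the varifold limit $\ti V$.

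The paper's setup repairs this by inserting an inner index. For each fixed $j$ one takes a full minimizing sequence $\Om^{j,k}$ in $\mathcal H(\Om^j,\an)$, sets $\ti\Om^j=\lim_k\Om^{j,k}$ and $V^j=\lim_k\Ga^{j,k}$, and proves (Lemma~\ref{l4.2}) that $\pd\ti\Om^j\cap\an$ is a stable minimal generalized hypersurface with $V^j\lrcorner\an=\pd\ti\Om^j\lrcorner\an$; the local minimizing property now holds \emph{exactly} because $\ti\Om^j$ is the limit of a genuine minimizing sequence, and the varifold/current identification follows from regularity. The averaging (Corollary~\ref{gta}) and the first-eigenfunction stability argument are applied at this fixed-$j$ level. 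The replacement $\ti V$ is then the limit of the already regular $V^j$, whose regularity persists by Schoen--Simon compactness, and $\ti\Ga^j=\Ga^{j,k(j)}$ is a diagonal sequence. In short, dropping the inner index loses precisely the information --- exact rather than approximate minimality at each $j$ --- that the regularity theory consumes.
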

	\begin{proof}
		Assume Lemma \ref{l4.1} and \ref{l4.2} below. Then exactly the same proof in Section 4.4 in \cite{DT} would carry over. The only cautious point is arguing that $\ti{V}$ is stationary. Using the same argument, we can only deduce that it is stationary among $G$-invariant varifolds. Invoke Corollary \ref{gst} to deduce that $\ti{V}$ is in fact stationary.
	\end{proof}
	This proposition is the basis on which we can bootstrap and utilize to prove the regularity of varifolds with good replacements. Our definition of replacements is exactly the same as Definition 2.5 in \cite{DT}. We do not require any $G$-invariance in the definition of the replacements. Instead, though our replacements are $G$-invariant by construction, and they will be stable with respect to all deformations by a simple first eigenfunction argument.
	
	Now, we fix some $\an\in\ann_{r(x)}{(x)}.$
	\subsection{Setting}
	For every $j,$ consider the class $\hm(\Om^j,\an)$ of $G$-sets $\Xi$ such that there exists a family $\{\Om_t\}$ satisfying $\Om_0=\Om^j,$ $\Om_1=\Xi,$ (a.1), (a.2), (a.3), for $\e=\frac{1}{j}$ and $U=\an.$ Now, pick a sequence $\Ga^{j,k}=\pd \Om^{j,k}$ which is minimizing for the perimeter in the class $\hm(\Om^j,\an)$. Up to subsequences, we can assume that
	\begin{itemize}[nosep]
		\item[]$\Om^{j,k}$ converges to a Caccioppoli $G$-set $\ti{\Om}^j$,
		\item[]	$\Ga^{j,k}$ converges to a $G$-varifold $V^j$;
		\item[]	$V^j$ (and a suitable diagonal sequence $\ti{\Ga}^j$=$\Ga^{j,k(j)}$) converges to a $G$-varifold $\ti{V}$.	
	\end{itemize}
	All the convergence comes from basic compactness theorem for integral currents and varifolds, and the equivalence of Cacciopoli sets with codimension-$1$ integral currents of finite mass. The $G$-invariance comes from the fact that $G$-invariant objects will form a closed subspace in both of these two cases.
	
	The proof of Proposition \ref{p2.6} will be broken into three steps. First, we need to prove the following lemma for the regularity of the minimizers $\ti{\Om}^j.$
	\begin{lem}\label{l4.1}(Compare Lemma 4.1 in \cite{DT}.)
		For every $j$ and $y\in \an,$ there exists a $G$-tube $B=B_\rh^G(y)\s \an$ and some $k_0\in\N$ with the following property. Every open $G$-set $\Xi$ such that $\pd\Xi$ is smooth except for a finite union of orbits,	$\Xi\sm B=\Om^{j,k}\sm B$, and $\hm^n(\pd\Xi)<\hm^n(\pd \Om^{j,k})$, is contained in the collection $\hm(\Om^j,\an)$ for $k\ge k_0.$
	\end{lem}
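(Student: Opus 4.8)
The plan is to reduce Lemma \ref{l4.1} to the following local replacement principle: the family $\hm(\Om^j,\an)$ is defined precisely by conditions (a.1), (a.2), (a.3) for $\e=1/j$ and $U=\an$, so to show that a competitor $\Xi$ (differing from $\Om^{j,k}$ only inside a small $G$-tube $B\s\an$, with strictly less perimeter, and with only orbit-singularities on $\pd\Xi$) lies in $\hm(\Om^j,\an)$, it suffices to exhibit a one-parameter $G$-family $\{\Om_t\}$ with $\Om_0=\Om^{j,k}$, $\Om_1=\Xi$, satisfying (s0)–(s3), (sw1), (sw3), fixed outside $\an$, and with area staying within $\e/8$ of $\hm^n(\pd\Om^{j,k})$ along the path. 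Since $\Om^{j,k}\in\hm(\Om^j,\an)$ already, concatenating this path with the one realizing $\Om^{j,k}$ as a member keeps us inside the class (the $\e/8$ budget is respected because we never exceed $\hm^n(\pd\Om^{j,k})$, which is itself $\le m_0+\e/8$-type controlled, but the key point is only that we stay below the starting area plus $\e/8$). So the whole content is: \emph{construct an equivariant isotopy-type deformation from $\Om^{j,k}$ to $\Xi$ inside a small tube, along which perimeter is monotone (or at least non-increasing up to the allowed error).}

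**First I would** choose the tube $B=B_\rh^G(y)$ small enough that $B$ is a genuine equivariant tubular neighborhood of the orbit $G.y$ (possible by \cite{GB}, taking $\rh<\rh_G(y)$), that $\ov B\s\an$, and that $\hm^n(\pd\Om^{j,k}\cap \ov B)$ is as small as we like for $k$ large — here one uses that $\Ga^{j,k}$ converges as varifolds to $V^j$, whose mass is locally bounded, together with a covering/pigeonhole argument in the radial parameter to find a radius $\rh$ at which the slice $\hm^{n-1}(\pd\Om^{j,k}\cap\pd B)$ is controlled; this is exactly the mechanism in the proof of Lemma 4.1 of \cite{DT}. Then the deformation is built in two stages: first push $\Om^{j,k}$ to agree with $\Xi$ on $\ov B$ by interpolating through the foliation of the tube by the level tubes $B_\si^G(y)$ (a $G$-equivariant operation since the tube structure is equivariant), cutting and pasting at radius $\si$ and letting $\si$ run from $\rh$ down to $0$; at each stage the area changes by at most the slice term plus the difference of the two configurations inside, all of which is $\le \e/8$ by the smallness of $\rh$ and $k_0$. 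One must check (s1): the intermediate sets have boundary smooth off finitely many orbits — this is where the hypothesis that $\pd\Xi$ is smooth except for a finite union of orbits, and the equivariant-tube structure (so the cutting locus $\pd B_\si^G(y)$ is a smooth $G$-hypersurface for a.e.\ $\si$), are used.

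**The main obstacle** I expect is the equivariance of the cut-and-paste interpolation together with the smoothness condition (s1): in the non-equivariant setting of \cite{DT} one foliates a geodesic ball by concentric spheres and interpolates, but here the tube $B_\rh^G(y)$ is foliated by tubes $B_\si^G(y)$ around an \emph{orbit}, not a point, and the transition hypersurfaces must be $G$-invariant and smooth away from finitely many orbits. I would handle this by invoking the equivariant tubular neighborhood theorem to write $B_\rh^G(y)\cong G\times_{G_y}D^c_\rh$ for the slice representation on the normal disk $D^c_\rh$ of dimension $c=\mathrm{cohom}$, so that the tubes $B_\si^G(y)$ correspond to $G\times_{G_y}D^c_\si$ and are automatically $G$-invariant smooth hypersurfaces for all $0<\si<\rh$; the interpolation then takes place fiberwise over the slice disk and is $G$-equivariant by construction. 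The finitely-many-orbit singular locus of the intermediate hypersurfaces comes from $\pd\Xi$'s own singular orbits together with (possibly) the single orbit $G.y$ where the tubes degenerate. One also needs (sw1) and (sw3) — but (sw3) (continuity of enclosed volume) is automatic for an isotopy-type deformation, and (sw1) holds because we are deforming boundaries of $G$-open sets throughout. Finally I would remark, as the lemma statement does, that the conclusion is uniform in $y$ only after fixing the tube $B$ depending on $y$, so there is no compactness issue to resolve here; the lemma is genuinely local.
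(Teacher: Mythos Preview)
Your overall plan matches the paper's: build an equivariant path from $\Om^{j,k}$ to $\Xi$ inside a small $G$-tube, keep area within $\e/8$, and concatenate with the path already realizing $\Om^{j,k}\in\hm(\Om^j,\an)$. Two points, however, need sharpening.

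The deformation you describe (``interpolating through the foliation \dots\ cutting and pasting at radius $\si$'') is not precise enough to verify (s1)--(s3); a naive cut-and-paste along $\pd B_\si^G(y)$ produces corners along an entire $G$-hypersurface, not a finite union of orbits. The paper (following Step~1 of Section~4.2 in \cite{DT}) makes the interpolation explicit via a \emph{cone over the orbit}: using the unique geodesic $[G.y,z]$ from each $z\in\ov B_r^G(y)$ to $G.y$ meeting the orbit orthogonally (furnished by the equivariant tubular neighborhood), one sets $K=\bigcup_{z\in\pd B_r^G(y)\cap\Om^{j,k}}(G.y,z)$, an open $G$-invariant cone, and homotopes both $\Om^{j,k}$ and $\Xi$ to the coned configuration. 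Since $\Om^{j,k}$ and $\Xi$ agree on $\pd B_r^G(y)$, the two cones coincide and the paths glue. This is the equivariant substitute for the point-cone in \cite{DT}, and your sketch does not quite arrive at it.

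The more serious gap is the area estimate. You assert $\hm^n(\pd\Om^{j,k}\cap\ov B)$ is small because $\Ga^{j,k}\to V^j$ and $\|V^j\|$ is ``locally bounded,'' but bounded is not small: what is actually needed is $\|V^j\|(B_\rho^G(y))\to 0$ as $\rho\to 0$, and this does \emph{not} follow from the monotonicity formula directly, since $B_\rho^G(y)$ is a tube around the orbit $G.y$, not a geodesic ball around a point. The paper singles this out as ``the only part that needs caution'' and resolves it via the ball-covering Lemma~\ref{la1}: cover $B_{4\rho}^G(y)$ by at most $C_y\rho^{-d_y}$ geodesic balls of radius comparable to $\rho$ centered on $G.y$, apply the monotonicity formula in each ball, and use $G$-invariance of $V^j$ to obtain $\|V^j\|(B_{4\rho}^G(y))\le C\rho^{\,n-d_y}$. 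Since $d_y=\dim G.y\le n-1$ when the cohomogeneity is at least $2$, the exponent $n-d_y\ge 1$ is positive and the right side tends to zero. This is exactly where the cohomogeneity hypothesis enters the proof of this lemma, and without it your $\e/8$ budget cannot be secured.
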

	Using the above lemma, we would like to show that
	\begin{lem}\label{l4.2}(Compare Lemma 4.2 in \cite{DT}.)
		$\pd\ti{\Om}^j\cap \an$ is a stable minimal generalized hypersurface in $\an$ and $V^j\lrcorner\an=\pd\ti{\Om}^j\lrcorner	 \an$.
	\end{lem}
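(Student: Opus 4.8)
\textbf{Proof proposal for Lemma \ref{l4.2}.}

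The plan is to follow the classical strategy for boundaries of local perimeter-minimizers, as in Section 4 of \cite{DT}, but carried out inside $G$-tubes rather than geodesic balls, and to use Lemma \ref{l4.1} to upgrade local minimality among \emph{all} competitors from the a priori minimality only among $G$-invariant competitors in $\hm(\Om^j,\an)$. First I would fix $y\in \an$ and take the $G$-tube $B=B_\rh^G(y)\s\an$ and the index $k_0$ provided by Lemma \ref{l4.1}. For $k\ge k_0$, the lemma says that any open $G$-set $\Xi$ agreeing with $\Om^{j,k}$ outside $B$ with smaller perimeter lies in $\hm(\Om^j,\an)$; combined with the fact that $\Ga^{j,k}=\pd\Om^{j,k}$ was chosen perimeter-minimizing in $\hm(\Om^j,\an)$, this forces $\Om^{j,k}$ to be perimeter-minimizing in $B$ \emph{among all $G$-competitors} (not merely among those reachable by an admissible isotopy). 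The next point is to invoke the argument of Lawson--Fleming (the reference \cite{HL}/\cite{BL} cited in the introduction): minimizing the perimeter among $G$-invariant Caccioppoli sets in a $G$-tube, under the cohomogeneity hypotheses in force, is equivalent to minimizing among all Caccioppoli sets. Hence $\Om^{j,k}$ is an honest local perimeter-minimizer in $B$ for $k\ge k_0$.

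Once local minimality among all competitors is established, the standard interior regularity theory for codimension-one area-minimizing integral currents (De Giorgi, Federer dimension reduction) applies: $\pd\Om^{j,k}\cap B$ is a smooth embedded minimal hypersurface away from a closed set of Hausdorff dimension at most $n-7$, with uniform mass and curvature estimates on the regular part. Since $y\in\an$ was arbitrary and the tubes $B_\rh^G(y)$ cover $\an$, these estimates are local-uniform on $\an$. I would then pass to the limit $k\to\infty$: by the compactness theorem for integral currents, $\Om^{j,k}\to\ti\Om^j$ in $L^1_{loc}$ and $\pd\Om^{j,k}\to\pd\ti\Om^j$ as currents, while by Allard-type compactness for stable minimal hypersurfaces (using the uniform curvature bounds off the small singular set) the varifold limit $V^j\ip\an$ is a stable minimal generalized hypersurface, and the current and varifold limits coincide with multiplicity one, giving $V^j\ip\an=\pd\ti\Om^j\ip\an$ and in particular $\ti\Om^j$ is itself locally perimeter-minimizing in $\an$, hence $\pd\ti\Om^j\cap\an$ is a stable minimal generalized hypersurface. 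Stability with respect to all (not just $G$-invariant) normal variations is then obtained by the first-eigenfunction argument flagged after Proposition \ref{p2.6}: if the second variation were negative on some variation, averaging over $G$ and using that the first Dirichlet eigenfunction of the stability operator does not change sign would produce a $G$-invariant destabilizing variation, contradicting $G$-stability.

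The main obstacle I expect is the passage through the Lawson--Fleming reduction in the presence of tubes rather than balls: one must check that the cohomogeneity restriction (not $0$ or $2$) is exactly what guarantees that a $G$-invariant perimeter-minimizer in a $G$-tube cannot be improved by a non-invariant competitor, and that the relevant comparison/cut-and-paste surgeries can be performed $G$-equivariantly inside $B_\rh^G(y)$ using the equivariant tubular-neighborhood structure. A secondary technical point is ensuring the limit $\ti\Om^j$ has no lower-dimensional ``ghost'' pieces and that the convergence $V^j\ip\an=\pd\ti\Om^j\ip\an$ holds with the correct (unit) multiplicity; this is handled as in \cite{DT} using the constancy theorem and the minimality, but one must be slightly careful near the singular orbits, where the analysis of tangent cones from the appendix (the splitting of tangent cones) is what makes the dimension-reduction argument go through in the tube setting.
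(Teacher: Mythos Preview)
Your proposal contains a genuine gap at the very first step. You write that ``$\Ga^{j,k}=\pd\Om^{j,k}$ was chosen perimeter-minimizing in $\hm(\Om^j,\an)$'' and conclude from Lemma \ref{l4.1} that $\Om^{j,k}$ minimizes perimeter in $B$ among $G$-competitors. But $\{\Om^{j,k}\}_k$ is only a \emph{minimizing sequence} in $\hm(\Om^j,\an)$, not a sequence of minimizers: for each fixed $k$ there can certainly exist $\Xi\in\hm(\Om^j,\an)$ with $\hm^n(\pd\Xi)<\hm^n(\pd\Om^{j,k})$. Lemma \ref{l4.1} then produces no contradiction, and nothing prevents $\Om^{j,k}$ from being far from area-minimizing in $B$. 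Consequently the regularity of the individual $\Om^{j,k}$ is not available, and your ``uniform curvature estimates on the regular part'' and the ensuing compactness-of-stable-hypersurfaces argument in the limit $k\to\infty$ have no input.

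The paper (following Section 4.3 of \cite{DT}) avoids this by proving minimality directly for the \emph{limit} $\ti{\Om}^j$: if $\ti{\Om}^j$ were not perimeter-minimizing among $G$-invariant competitors in $B^G_{\rh/2}(y)$, one takes a better $G$-invariant competitor $\Xi$, glues it to $\Om^{j,k}$ across the shell $B^G_\rh\sm B^G_{\rh/2}$ (using a coarea slicing together with the $L^1$-convergence $\Om^{j,k}\to\ti{\Om}^j$; this is where the smooth $G$-invariant approximation via the averaging $G(f)$ is needed), and obtains smooth $G$-sets $\Xi^{j,k}$ agreeing with $\Om^{j,k}$ outside $B^G_\rh$ whose perimeter lies strictly below $\inf_{\hm(\Om^j,\an)}\hm^n(\pd\cdot)$ for large $k$. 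Lemma \ref{l4.1} forces $\Xi^{j,k}\in\hm(\Om^j,\an)$, a contradiction. Only \emph{after} $\ti{\Om}^j$ is known to minimize among $G$-invariant competitors does one invoke Corollary \ref{gta} (the averaging argument of Proposition \ref{keyprop}) to upgrade to all competitors, and then apply codimension-$7$ regularity to $\pd\ti{\Om}^j$ directly. Incidentally, your worry that the cohomogeneity restriction is what makes the Lawson--Fleming step work is misplaced: the averaging argument in Proposition \ref{keyprop} requires no cohomogeneity hypothesis; that assumption enters only later, in the tangent-cone analysis of Lemma \ref{l5.2} via Simons' theorem.
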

	However, for proof of Lemma \ref{l4.2}, we have to work a little bit harder than the proof of Lemma 4.2 in \cite{DT}. The idea of that proof can be utilized, but since every object and deformation are $G$-invariant, we cannot use the regularity for Plateau problem. Instead, we have to work harder for a regularity result for equivariant Plateau problem in our setting.
	\subsection{Proof of Lemma \ref{l4.1}}
	Step 1 in Section 4.2 of \cite{DT} can be used unchanged. Transversality in the proof can be deduced from Theorem 6.35 (Parametric Transversality) in \cite{JL}, since $\Ga^{j,k}$ is smooth except for finitely many orbits, which corresponds to finitely many values of radius in the tube. The constructions of cones is a little different. For each $z\in\ov{B}^G_r(y)$, there is a unique geodesic from $z$ to $G.y$ and intersecting $G.y$ orthogonally by construction of tubular neighborhood. Denote this geodesic $[G.y,z].$ As usual, $(G.y,z)=[G.y,z]\sm (G.x\cup\{z\})$. We let $K$ be the open cone,
	\begin{align*}
	K=\bigcup_{z\in\pd B^G_r(y)\cap \Om^{j,k}}(G.y,z).
	\end{align*}By construction $K$ is $G$-invariant and smooth. If we use $\exp^\perp$ to denote the exponential map of the normal bundle $N(G.y)$ of $G.y$ in $M,$ then $\exp^\perp$ is a diffeomorphism on the tube $B_r^G(y).$ Now, if $K'$ is the cone over $(\exp^\perp)\m \Om^{j,k}\cap \pd B_r(y)$ in the normal bundle, then $\exp^\perp K'=K.$ (Cone over a generalized hypersurface $S$ in the normal bundle is $C(S)=\{(p,tv)|t\in[0,1],(p,v)\in S\}$)
	The rest of Step 1 in Section 4.2 of \cite{DT} can be adapted easily.
	
	Step 2 of the proof in \cite{DT} is the volume estimates, which mostly carry over unchanged and consists of basic calculus on manifolds combined with tubular neighborhoods. We replace every geodesic balls in those estimates with invariant tubes instead. Note that inequality (4.5) in \cite{DT} shall have $\hm^n(\pd K)$ on the left hand side.
	
	The only part that needs caution is the monotonicity formula estimate (4.13) in \cite{DT}. By the convergence of varifolds, we still have $$\hm^n(\pd \Om^{j,k}\cap B_{2\rh}^G(y))\le 2\no{V_j}(B_{4\rh}^G(y)).$$
	However, recall that $B_{\rh}^G(y)$ is a tube around $G.y,$ so we ca not use the monotonicity formula immediately. 
	
	Now, invoke lemma \ref{la1} by choosing $20\rh<\rh_0.$ By $G$-invariance of $V^j,$ and the monotonicity formula applied to $V^j,$ we can deduce that
	\begin{align*}
	\no{V^j}(B^G_{4\rh}(y))&\le \no{V^j}\left(\bigcup_{z\in \mathcal{B}}B_{20\rh}(z)\right)\\
	&\le |\mathcal{B}|\no{V^j}(B_{20\rh}(z))\\
	&\le C_y(20\rh)^{-d_y}C_M\no{V^j}(M)(20\rh)^{n}.\\
	&\le 20^{n-d_y}C_yC_M\no{V^j}(M)\rh^{n-d_y}.
	\end{align*}
	Note that $n-d_y\ge 1$ since we're dealing with cohomogeneity at least $2.$ Since the proof in Step 2 of Section 4.2 in \cite{DT} works as long as the exponent on $\rh$ is larger than 0 in the above estimate, we're done.
	\subsection{Regularity of Equivariant Plateau Problem in $G$-tubes}
	In this section, we will prove the following proposition, in which we adapt an idea by Fleming and Lawson in \cite{HL}.
	
	Let $\Om$ be a smooth $G$-invariant open Caccioppoli set, whose boundary intersects $B^G_{\rh}(x)$ transversely for some small $\rh$. Define
	$$\mathcal{P}(B^G_{\rh}(x),\Om)=\{\Om'\s M:\Om'\sm B^G_{\rh}(x)=\Om\sm B^G_{\rh}(x), \Om'\text{ is a Caccioppoli set}\},$$ to be the class of sets of finite perimeter that coincide with $\Om$ outside of $U.$  
	\begin{prop}\label{keyprop}
		Let $\Om$ be as above and $\rh$ be small enough so that $\pd B_\rh^G(x)$ is well-defined. Then there exists a $G$-invariant Caccioppoli set $\Xi\in \mathcal{P}(B^G_{\rh}(x),\Om)$ minimizing the perimeter in $M$. Moreover, any such $G$-invariant minimizer $\Xi$ is, in $B_\rh(G)(x)$, an open set whose boundary is smooth outside of a singular set of Hausdorff dimension at most $n-7.$
	\end{prop}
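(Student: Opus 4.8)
The plan is to reduce the equivariant Plateau problem to the ordinary one via Lawson's trick, exactly as advertised in the introduction. The existence of a $G$-invariant perimeter minimizer $\Xi$ in the class $\mathcal{P}(B^G_\rh(x),\Om)$ is a routine application of the direct method: the class is nonempty (it contains $\Om$), closed under the relevant convergence (BV-compactness on the compact manifold $M$, with the constraint $\Om'\setminus B^G_\rh(x)=\Om\setminus B^G_\rh(x)$ preserved in the limit), and lower semicontinuity of perimeter gives a minimizer; since the class of $G$-invariant Caccioppoli sets is closed under this convergence (the $G$-invariant currents form a closed subspace, as already noted in Section \ref{repl}), the minimizer can be taken $G$-invariant. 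So the only real content is the \emph{interior regularity} of such a $G$-invariant minimizer inside $B^G_\rh(x)$.

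For regularity, the key point is that a $G$-invariant set minimizing perimeter among $G$-invariant competitors is in fact minimizing among \emph{all} competitors, at least locally inside the tube. This is the Fleming--Lawson averaging principle from \cite{HL}: given an arbitrary competitor $\Om'$ agreeing with $\Xi$ outside a small ball $B\subset B^G_\rh(x)$, one produces a $G$-invariant competitor with no larger perimeter by averaging the characteristic functions over the group, using that $G$ acts by isometries so that the total variation (perimeter) of the average is at most the average of the total variations, which equals the perimeter of $\Xi$ itself (each $g\cdot\Om'$ has the same perimeter as $\Om'$). One must check that the averaged set still lies in $\mathcal{P}(B^G_\rh(x),\Om)$, i.e. agrees with $\Om$ outside the tube — this holds because $B^G_\rh(x)$ is $G$-invariant, so $g\cdot\Om'$ still agrees with $\Xi$ (hence $\Om$) outside the tube for every $g$, and averaging preserves this. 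Actually, the cleaner route — and the one that avoids averaging sets, which need not return a set — is the coarea-slicing version: write the averaged $BV$ function's superlevel sets, apply the isoperimetric/coarea inequality to find a superlevel set that is a $G$-invariant competitor of no larger perimeter, and conclude that $\Xi$ beats it. Either way one obtains: $\partial\Xi\lrcorner B^G_\rh(x)$ is a local perimeter minimizer in the ambient manifold.

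Once we know $\partial\Xi$ is an honest codimension-$1$ area (perimeter) minimizing boundary in the open set $B^G_\rh(x)$, the classical regularity theory for minimizing hypersurfaces — De Giorgi--Federer--Fleming--Almgren--Simons, exactly as quoted for the Plateau problem in \cite{DT} — applies verbatim: $\partial\Xi$ is a smooth embedded minimal hypersurface away from a closed singular set of Hausdorff dimension at most $n-7$ (and empty when $n\le 6$). The dimension bound needs no equivariant refinement here; it is the off-the-shelf interior regularity statement, applied in the open tube rather than in a geodesic ball, which is harmless since both are just open subsets of $M$.

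The main obstacle is making the Lawson reduction airtight at the level of Caccioppoli sets rather than smooth hypersurfaces: averaging $\chi_{\Om'}$ over $G$ gives a $BV$ function valued in $[0,1]$, not a characteristic function, so one cannot directly say ``the averaged set is a competitor''. The fix is to pass to superlevel sets $\{u_G>s\}$ of the average $u_G=\int_G \chi_{g\cdot\Om'}\,d\mu(g)$, each of which \emph{is} a $G$-invariant Caccioppoli set agreeing with $\Om$ outside $B^G_\rh(x)$, and to use $\int_0^1 P(\{u_G>s\})\,ds = \|Du_G\|(M) \le \int_G \|D\chi_{g\cdot\Om'}\|(M)\,d\mu(g) = P(\Om';M)$ (the first equality is the coarea formula for $BV$, the inequality is convexity of total variation under the isometric $G$-action and Fubini). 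Hence some superlevel set $\{u_G>s\}$ has perimeter $\le P(\Om';M)$; by $G$-invariance and the minimality of $\Xi$ among $G$-invariant competitors, $P(\Xi;M)\le P(\{u_G>s\};M)\le P(\Om';M)$, which is exactly the unrestricted minimality needed to feed into the regularity theory. Verifying that the slicing and the ``agrees outside the tube'' constraint interact correctly — in particular that $\{u_G>s\}\setminus B^G_\rh(x)$ is independent of $s\in(0,1)$ and equals $\Om\setminus B^G_\rh(x)$, since $\chi_{g\cdot\Om'}$ is already $0$ or $1$ there independent of $g$ — is the one point that requires care but is ultimately straightforward.
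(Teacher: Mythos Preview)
Your argument is correct and rests on the same core mechanism as the paper's: average a characteristic function over $G$, then slice via the coarea formula to extract a $G$-invariant competitor of no larger perimeter. The only structural difference is the order of operations. The paper first takes an \emph{unrestricted} perimeter minimizer $X$ (whose existence and regularity come straight from Theorem~1.2 in \cite{DT}), averages $\chi_X$ over $G$, and observes that almost every superlevel set $X_\lambda$ is a $G$-invariant competitor with $\ma(\pd X_\lambda)\le\ma(\pd X)$; since $X$ was already minimizing, equality is forced and $X_\lambda$ is itself a $G$-invariant minimizer. You instead start from a $G$-invariant minimizer among $G$-invariant competitors (obtained by the direct method) and average an \emph{arbitrary} competitor $\Om'$ to show $\Xi$ beats it. Your route in effect proves the proposition and Corollary~\ref{gta} simultaneously, whereas the paper derives the corollary from the proposition; conversely, the paper's route avoids having to run the direct method separately in the $G$-invariant subclass. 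Either way the substance is identical, and your handling of the two delicate points---that averaging returns a $[0,1]$-valued $BV$ function rather than a set, and that the constraint outside the tube survives slicing because $\chi_{g\cdot\Om'}$ is already $\{0,1\}$-valued there independently of $g$---matches what the paper does with Federer's 4.5.9(12)--(13).
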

	\begin{proof}
		In the following reasoning, we will sometimes abuse the notation to use $U$ to denote the current $[[U]]$ associated with a Caccioppoli set $U$ to avoid cumbersome repetition of $[[]].$
		
		By Theorem 1.2 in \cite{DT}, it suffices to prove that there exists a $G$-invariant minimizer in $\mathcal{P}(B^G_{\rh}(x),\Om),$ since any minimizer will have the codimension $7$ regularity in $B_\rh^G(x)$.
		
		First, we can pick any minimizer $X$ in the class without the requirement of $G$-invariance. Such a minimizer exists by Theorem 1.2 in \cite{DT}. 
		
		Now, define 
		\begin{align*}
		f(x)=\int_G1_{X}(gx) d\mu(g).
		\end{align*}  
		By construction we have $0\le f\le 1.$ Since $f$ is defined by averaging lower-semicontinuous functions, we see that $f$ is also lower-semicontinuous by Fatou lemma. Thus, the sets
		$$X_\lam=f\m (\lambda,1],$$ are open. Moreover, $X_\lam$ is $G$-invariant because $f$ is. By construction, as long as $\lam<1,$ we have $X_\lam\sm B_\rh^G(x)=\Om\sm B_\rh^G(x).$  Thus $X_\lam$ belongs to the class $\mathcal{P}(B_\rh^G(x),\Om)$ for all $\lam<1.$
		
		Let $X_\lam=X_\lam\ip B^G_\rh(x)$, and $E_f=f(x)\lp d\textnormal{Vol}_M .$ $\ma(E_f)$ is clearly bounded by the volume of $M$. For any $n-1$ form $\om$ on $M,$ we have
		\begin{align}
		\pd E_f(\om)=&\int_M\int_G 1_X(gx) d\om d\mu(g)\vol_M(x)=\int_G\int_M  1_{g\m X}(x)d\om d\vol_Md\mu\\=&\int_G \pd[[1_{g\m X}\lp d\vol_M]](\om) d\mu=\int_G \pd g\pf X (\om) d\mu.
		\end{align}
		Thus, we have $\ma(\pd E_f)\le \int_G\ma(\pd [[1_{g\m X}\lp d\vol_M]])d\mu\le \ma(\pd X)$ by lowersemi continuity of mass and the fact that every $g$ acts as an isometry. This implies $E_f$ is a normal current, and thus 
		By 4.5.9(12) in \cite{HF}, $X_\lam$ are integral currents and Caccioppoli sets (interpreted in the right sense).
		
		In the following paragraphs, we will prove that $X_{\lam}$ is a $G$-invariant minimizer in the class $\mathcal{P}(B_\rh^G(x),\Om).$ 
		
		By 4.5.9 (13) in \cite{HF} and identity (4.2), we have
		\begin{align*}\int_G g\pf Xd\mu(g)=	E_f=\int_0^1 X_\lam d\lam,
		\end{align*}and thus
		\begin{align*}
		\int_G\pd g\pf Xd\mu(g)=\pd E_f=\int_0^1 \pd X_\lam d\lam
		\end{align*}
		With gradient variational formula 4.5.9(13) in \cite{HF}, this immediately yields
		\begin{align*}
		\int_{[0,1]}\ma(\pd X_\lam)d\lam=\ma(\pd E_f)\le \ma(\pd X).
		\end{align*}
		Since $\pd X_\lam$ are integral currents, we have $\ma(\pd X_\lam)\le\ma(\pd X)$ by definition of a minimizer. This implies that $\ma(\pd X_\lam)=\ma(\pd X)$ for almost every $\lam$, and $X_\lam$ are perimeter minimizers for almost every $\lam$. Thus, $X_{\lam}$ is a $G$-invariant minimizer in the class $\mathcal{P}(B_\rh^G(x),\Om).$	 
	\end{proof}
	\begin{cor}\label{gta}
		Let $\Om$ be a smooth $G$-invariant open Caccioppoli set, whose boundary intersects $B^G_{\rh}(x)$ transversely. If a $G$-invariant Caccioppoli set $\Xi^G$ minimizes perimeter among $G$-invariant elements in $ \mathcal{P}(B^G_{\rh}(x),\Om).$ Then $X$ minimizes the perimeter in $ \mathcal{P}(B^G_{\rh}(x),\Om),$ without the restriction to $G$-invariant elements. Moreover, $\Xi$ is, in $U$, an open set whose boundary is smooth outside of a singular set of Hausdorff dimension at most $n-7.$
	\end{cor}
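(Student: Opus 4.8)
The plan is to reduce everything to Proposition \ref{keyprop} by a one-line comparison. Proposition \ref{keyprop} provides a $G$-invariant Caccioppoli set $\Xi_0\in\mathcal{P}(B_\rh^G(x),\Om)$ which minimizes the perimeter among \emph{all} elements of $\mathcal{P}(B_\rh^G(x),\Om)$, not merely the $G$-invariant ones. Now let $\Xi^G$ be any $G$-invariant set that minimizes the perimeter among $G$-invariant elements of $\mathcal{P}(B_\rh^G(x),\Om)$. Since $\Xi_0$ is itself a $G$-invariant competitor, minimality of $\Xi^G$ within the $G$-invariant class gives $\ma(\pd\Xi^G)\le\ma(\pd\Xi_0)$; and since $\Xi^G$ is in particular an arbitrary competitor, minimality of $\Xi_0$ gives $\ma(\pd\Xi_0)\le\ma(\pd\Xi^G)$. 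Hence $\ma(\pd\Xi^G)=\ma(\pd\Xi_0)$, so $\Xi^G$ minimizes the perimeter in $\mathcal{P}(B_\rh^G(x),\Om)$ with no $G$-invariance restriction.

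Once $\Xi^G$ is known to be an unconstrained perimeter minimizer with boundary data fixed outside $B_\rh^G(x)$, the interior regularity statement is immediate from the codimension-one regularity theory for area minimizers, i.e.\ Theorem 1.2 in \cite{DT}: after passing to the appropriate representative, $\Xi^G$ is, inside $B_\rh^G(x)$, an open set whose reduced boundary is a smooth embedded hypersurface away from a closed singular set of Hausdorff dimension at most $n-7$. As noted in the introduction, $G$-invariance of $\Xi^G$ together with the fact that $G$ acts by isometries forces this singular set to be a union of orbits.

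I do not expect any obstacle here: the substance has already been carried out in Proposition \ref{keyprop}. The only conceptual point worth underlining is that the argument essentially uses the \emph{existence} of an unconstrained minimizer that happens to be $G$-invariant --- the Fleming--Lawson averaging and slicing construction --- rather than merely the regularity of $G$-invariant minimizers; without it one could not exclude that minimizing inside the $G$-invariant class yields perimeter strictly above the unconstrained infimum.
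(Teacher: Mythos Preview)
Your proof is correct and follows essentially the same route as the paper: you invoke Proposition~\ref{keyprop} to obtain a $G$-invariant unconstrained minimizer, compare perimeters in both directions to force equality, and then appeal to the standard codimension-one regularity theory. The only cosmetic difference is that you cite Theorem~1.2 in \cite{DT} for the regularity step whereas the paper cites \cite{HF1}; both are equivalent for this purpose.
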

	\begin{proof}
		Pick a $G$-invariant minimizer $\Xi$ in Proposition \ref{keyprop}. Since $\Xi^G$ is minimizing among all $G$-invariant competitors, we have $$\text{Per}(\Xi^G,B_\rh^G(x))\le \text{Per}(\Xi,B_\rh^G(x)).$$ However, $\Xi$ is minimizing among all competitors, not only the invariant ones, so this immediately implies $\Xi^G$ is a minimizer among all competitors. The regularity result follows from \cite{HF1}
	\end{proof}
	\subsection{Proof of Lemma \ref{l4.2}}
	The same proof as Section 4.3 of \cite{DT} applies up until the last sentence in that section. We can use the same argument to show that $\pd\ti{\Om}^j$ is an area minimizer in $B^G_{\rh/2}(y)$ with respect to $G$-invariant competitors. For approximating $\Xi^{j,k}$ by invariant smooth functions, we can do this by the following averaging construction. Let $$G(f)(x)=\int_G f(g.x) d\mu(g).$$
	$G(f)$ is smooth by construction if $f$ is smooth. Now, just take a smooth approximation $f_n$ of $1_\Xi^{j,k}$ in the sense that $f_n\to 1_{\Xi^{j,k}}$ in $L^1$ and $\textnormal{Var}(f_n,M)\to \textnormal{Per}(\Xi^{j,k}).$ Then up to subsequence, $G(f_n)$ is also a smooth approxmiation of $1_{\Xi^{j,k}}.$ Then level sets of $G(f_n)$ would provide $G$-invariant smooth approximations of $\Xi^{j,k}.$ The rest of the argument goes through up until the last sentence about stability and stationarity of $\pd\ti{\Om}^j.$
	
	All our construction and competitors are $G$-invariant. To get rid of $G$-invariance restriction on minimizing, we invoke Corollary \ref{gta} to show that the minimizer is minimizing among all competitors. Then to prove stability, first note that $\pd\ti{\Om^j}$ is stable with respect to $G$ invariant vector fields by the same argument as in the non-invariant case in \cite{DT}. To show this, note that stationarity of $\pd\ti{\Om^j}$ comes from the locally minimizing property in small tubes as we have just shown and an invariant partition of unity. If $\pd\ti{\Om^j}$ is not stable with respect to $G$-invariant vector fields, then take a $G$-invariant vector field that gives a negative second variation on the regular part. Flow $\pd\ti{\Om^j}$ according to this vector field for a small time to get a family of open sets which lies in $\mathcal{H}(\Om^j,\an)$ yet ends up with smaller area than $\pd\ti{\Om^j}$ by the second variation. This is impossible as it contradicts the minimality of $\pd\ti{\Om^j}$ in $\mathcal{H}(\Om^j,\an)$. Now we claim that the first eigenfunction on the regular part must be equivariant, since composition with any $g$ gives the same Rayleigh quotient and the first eigenvalue is simple. Thus, considering the Rayleigh quotient of invariant functions and the Rayleigh quotient among all functions, we see that the minimum of the two coincide and conclude it's stable on the regular part, which suffices for the compactness and regularity theorem in \cite{SS}. This is also the argument used by Proposition 4.6 in \cite{DK}.
	\section{Regularity of $G$-Varifolds with replacements in $G$-annuli}\label{conc}
	This section corresponds to Section 5 of \cite{DT}. 
	
	We apply Proposition \ref{p2.6} three times as in Section 2.4 of \cite{DT} to obtain
	\begin{prop}\label{p2.7}(Compare Proposition 2.7 in \cite{DT}.)
		Let $V$ and $r$ be as in Proposition \ref{p2.6}. Fix $x\in M$ and $\an\in\ann_{r(x)}(x)$. Then\begin{itemize}
			\item[(a)] $V$ has a replacement $V'$ in $\an$ such that,		\item[(b)] $V'$ has a replacement $V''$ in any
			\begin{align*}
			\an_{'}\in \an_{r(x)}(x)\cup \bigcup_{y\not\in G.x}\ann_{r'(y)}(y),
			\end{align*}
			\item[(c)] $V''$ has a replacement $V'''$ in any $\an_{''}\in \an_{r''(y)}(y)$ with $y\in M,$ where $r',r''$ are both positive functions.
		\end{itemize}
	\end{prop}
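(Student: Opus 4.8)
The plan is to apply Proposition \ref{p2.6} three times in succession, each time feeding the varifold, the $G$-almost minimizing sequence, and the positive $G$-function produced by one application as the input data for the next. The observation that makes the iteration legitimate is that the output triple $(\ti V,\{\ti\Ga^j\},r')$ of Proposition \ref{p2.6} satisfies exactly the hypotheses imposed on the input triple $(V,\{\Ga^j\},r)$ in Proposition \ref{exam}: indeed $\ti\Ga^j\to\ti V$ with $\ti V$ a replacement, hence a stationary $G$-varifold (by Definition 2.5 together with Corollary \ref{gst}), and by clause (b) of Proposition \ref{p2.6} the sequence $\{\ti\Ga^j\}$ is $G$-a.m. in every annulus of $\ann_{r'(y)}(y)$ for all $y\in M$. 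Thus Proposition \ref{p2.6} can be re-invoked with the new data around any base point and any admissible annulus.

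Concretely: first I would apply Proposition \ref{p2.6} to $V$, $\{\Ga^j\}$, $r$ and the fixed annulus $\an\in\ann_{r(x)}(x)$. This yields a replacement $V'$ for $V$ in $\an$, a sequence $\{\Ga'^j\}\to V'$, and a positive $G$-function $r_1$ with $r_1(x)=r(x)$ such that $\{\Ga'^j\}$ is $G$-a.m. in every $\an'\in\ann_{r_1(y)}(y)$, $y\in M$. This is (a), and, setting $r'\coloneqq r_1$, it provides the data for the next step. Second, since $(V',\{\Ga'^j\},r')$ now plays the role of $(V,\{\Ga^j\},r)$, for every point $y$ and every annulus in $\ann_{r'(y)}(y)$ I can apply Proposition \ref{p2.6} again to obtain a replacement $V''$ for $V'$ in that annulus. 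Because $r_1(x)=r(x)$ and $r_1$ is $G$-invariant, the admissible annuli around $x$ have radius up to $r(x)$, so the class $\ann_{r(x)}(x)\cup\bigcup_{y\notin G.x}\ann_{r'(y)}(y)$ is precisely $\bigcup_{y\in M}\ann_{r'(y)}(y)$; this gives (b). Third, the output of that second application comes equipped with its own sequence and positive $G$-function $r''$ (depending on the annulus chosen in the second step); applying Proposition \ref{p2.6} a third time to $V''$ produces, for every $\an''\in\ann_{r''(y)}(y)$, a replacement $V'''$ for $V''$, which is (c).

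I do not expect a genuine obstacle here, since all the analytic content—existence of $G$-invariant replacements with codimension-$7$ regularity, persistence of the $G$-a.m. property on smaller scales, and the passage from $G$-stationarity to stationarity—is already packaged inside Proposition \ref{p2.6}. The only point requiring care, and the only place the argument differs in substance from Section 2.4 of \cite{DT}, is the bookkeeping of the radius functions: one must record that each new radius function agrees at the base point $x$ with the previous one, which is guaranteed by clause (c) of Proposition \ref{p2.6}, so that the nested classes of annuli in (a)–(c) genuinely contain the annuli around $x$ at the original scale $r(x)$; and one should note that re-invoking Proposition \ref{p2.6} around a point $y\notin G.x$ does not disturb the replacement already performed in $\an$, because a replacement agrees with the original varifold outside the annulus in which it is constructed. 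With this bookkeeping in place, the argument of Section 2.4 of \cite{DT} transfers essentially verbatim.
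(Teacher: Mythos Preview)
Your proposal is correct and matches the paper's approach exactly: the paper itself gives no detailed argument but simply states that one applies Proposition \ref{p2.6} three times as in Section 2.4 of \cite{DT}. Your bookkeeping of the radius functions---in particular the use of clause (c) of Proposition \ref{p2.6} to ensure $r'(x)=r(x)$ so that $\ann_{r(x)}(x)\cup\bigcup_{y\notin G.x}\ann_{r'(y)}(y)=\bigcup_{y\in M}\ann_{r'(y)}(y)$---is precisely the point one needs to check, and you have it right.
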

	This section will be dedicated to prove the following proposition
	\begin{prop}\label{p2.8}(Compare Proposition 2.8 in \cite{DT}.)
		Let $V$ be as in Proposition \ref{p2.7}. Then $V$ is induced by a generalized minimal hypersurface $\Si$ in the sense of Definition \ref{d1.5}. 
	\end{prop}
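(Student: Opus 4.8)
The plan is to follow the blueprint of Section 5 of \cite{DT} essentially verbatim, replacing geodesic balls by $G$-tubes throughout and invoking the tangent-cone splitting from the appendix to handle the extra directions coming from the orbits. The starting point is Proposition \ref{p2.7}: $V$ has a chain of three iterated replacements $V', V'', V'''$ in suitable $G$-annuli around any point. First I would record the basic consequences of having one replacement: $V$ is stationary (by Proposition \ref{esv} and Corollary \ref{gst}, it already is), it has a well-defined density everywhere, and by the replacement property $\no{V'}(M)=\no{V}(M)$ together with $V'\ip\an$ being a stable minimal generalized hypersurface, one gets that $V$ itself is, inside $\an$, equal to the stable minimal hypersurface $V'\ip\an$ away from the ``gluing'' where $V=V'$. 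The key local statement to prove, exactly as in \cite{DT}, is that at every $x\in M$ the varifold $V$ has a unique tangent cone that is a hyperplane (with multiplicity), which upgrades via Allard's theorem to $C^{1,\alpha}$ — hence smooth by elliptic regularity — regularity of $V$ near $x$ outside a set of dimension $\le n-7$.

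The argument for the tangent-cone statement is the two-step ``replacing on overlapping annuli'' trick. One fixes $x$, chooses $\an, \an_{'}, \an_{''}$ concentric $G$-annuli around $G.x$ with carefully interlaced radii so that $\an\cup\an_{'}$ and $\an_{'}\cup\an_{''}$ each cover a full $G$-annular region, takes the replacements $V', V'', V'''$, and shows that on the overlap regions the smooth stable hypersurfaces from consecutive replacements must glue up $C^1$-smoothly: this is where one uses that a stable minimal hypersurface meeting another stable minimal hypersurface of the same mass tangentially along a hypersurface-sized set, with matching traces, extends smoothly across (a unique-continuation / maximum-principle type argument on the regular parts, plus the dimension bound on the singular sets). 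Having glued, one obtains that $V$ restricted to a punctured $G$-tube $B^G_{r}(x)\setminus G.x$ is a single smooth embedded stable minimal hypersurface, and then the monotonicity formula for $V$ (which is genuinely stationary, not merely $G$-stationary) forces the density to be constant along the radial direction transverse to the orbit, so every tangent cone at $x$ is a cone that is smooth and stable away from its vertex.

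The one genuinely new ingredient, and what I expect to be the main obstacle, is passing from ``tangent cone at $x$'' in the tube setting to ``hyperplane''. In \cite{DT} one blows up at geodesic balls and the tangent cone lives in $\R^{n+1}$; here, when $x$ lies on a positive-dimensional orbit $G.x$ of dimension $d = d_x \ge 1$, the natural object to blow up is the $G$-tube, and the rescalings converge to a cone in $T_x M$ that is invariant under translations along $T_x(G.x)$. This is precisely the content of the splitting lemma in the appendix: the tangent cone at $x$ splits isometrically as $C \times \R^{d_x}$ for a stationary integral cone $C$ in the normal space $N_x(G.x) \cong \R^{n+1-d_x}$, and the stability/minimality of the replacement descends to $C$. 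Since $n+1-d_x \ge 3$ (cohomogeneity $\ge 2$ forces $\dim N_x(G.x) - 1 = n - d_x \ge 1$, and one checks $d_x \le n-1$ so the normal space has dimension $\ge 2$; in fact for the regularity one needs $C$ to be a stable minimal cone in Euclidean space of the right dimension), the dimension reduction argument of Schoen–Simon \cite{SS}/Federer applies to $C$, giving that $C$ is a hyperplane (through the origin of the normal space) as long as $n+1-d_x \le 7$, and more generally that $\text{Sing}\,C$ has dimension $\le (n-d_x) - 7$; crossing with $\R^{d_x}$ and noting that $\text{Sing}\,V$ is a union of orbits (as remarked in the introduction) gives $\dim\text{Sing}\,V \le n-7$. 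The delicate point throughout is bookkeeping the transversality of $\pd\Om$ with the tube boundaries $\pd B^G_\rho(x)$ and ensuring the interlaced radii can be chosen $G$-invariantly; these are handled as in the proof of Lemma \ref{l4.1} using parametric transversality and pushing forward along orbits, so no essentially new difficulty arises. I would then conclude, exactly as in \cite{DT}, that $V$ is induced by a generalized minimal hypersurface $\Si$ in the sense of Definition \ref{d1.5}, and that $\Si$ is $G$-invariant because $V$ is and the regular part's tangent planes push forward under $G$.
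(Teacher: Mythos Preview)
Your proposal is essentially the paper's own approach: follow Section 5 of \cite{DT} with $G$-tubes in place of geodesic balls, invoke the appendix splitting (Lemma \ref{stc}) so that any tangent cone at $x$ factors as $T_x(G.x)\times W$ with $W$ a stable cone in the normal slice, apply Simons/Schoen--Simon to $W$, and glue $V',V'',V'''$ across overlapping annuli via unique continuation and the maximum principle exactly as in Steps 1--5 of \cite{DT} (the paper even notes, as you do, that one may substitute the sharp results of \cite{NW2} for the PDE arguments there).

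One correction to your dimension bookkeeping: where you write ``cohomogeneity $\ge 2$ forces \ldots'', note that cohomogeneity $c$ only gives $n+1-d_x\ge c$, so cohomogeneity $2$ would place $W$ as a $1$-dimensional stable cone in $\R^2$, where Simons' theorem does not apply and triple-junction singularities are possible. It is precisely the hypothesis $c\ge 3$ that guarantees $W$ lives in a Euclidean space of dimension $\ge 3$ and hence is a plane when $n-d_x\le 6$; the paper flags this exact point at the end of the proof of Lemma \ref{l5.2}, and it is the reason cohomogeneity $2$ is excluded from Theorem \ref{t0.1}.
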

	\subsection{Tangent cones}
	\begin{lem}\label{l5.2}
		(Compare Lemma 5.2 in \cite{DT}.) Let $V$ be a stationary $G$-varifold in an open $G$-set $U\s M$ having a $G$-invariant replacement in any annulus $\an\in\ann_{r(x)}(x)$ for some positive function $r.$ Then
		\begin{itemize}[nosep]
			\item $V$ is integer rectifiable;
			\item $\ta(x,V)\ge 1$ for any $x\in U$;
			\item any tangent Cone $C$ to $V$ at $x$ is a minimal generalized hypersurface for general $n$ and (a multiple of) a hyperplane for $n\le 6$ or $\dim G.x\ge n-6.$
		\end{itemize}
	\end{lem}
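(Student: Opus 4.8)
The plan is to follow the structure of Lemma 5.2 in \cite{DT}, exploiting the replacement property to run the standard tangent-cone analysis, with the additional equivariant input supplied by the splitting of tangent cones proved in the appendix. First I would establish integer rectifiability and the density lower bound $\ta(x,V)\ge 1$. For this, fix $x\in U$ and a small annulus $\an\in\ann_{r(x)}(x)$; the replacement $V'$ agrees with $V$ outside $\ov{\an}$, has the same mass, and is a stable minimal generalized hypersurface inside $\an$. Because $V'\ip\an$ is an honest smooth hypersurface away from a set of dimension at most $n-7$, the monotonicity formula holds cleanly for $V'$ on small balls centered at regular points, giving density one there; letting the replacement annuli shrink toward $x$ and using that $\no{V'}(M)=\no{V}(M)$ together with the monotonicity formula for the stationary varifold $V$ itself (valid since $V$ is stationary by construction), one transfers rectifiability and the density bound from $V'$ to $V$ exactly as in \cite{DT}. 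The point is that the replacement is regular enough to serve as a ``model'' near $x$ while sharing the mass ratio with $V$.

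Next I would analyze a tangent cone $C$ to $V$ at $x$. Rescaling $V$ around the orbit $G.x$ (using the exponential map of the normal bundle to identify a tube with the normal bundle, as done already in Section~\ref{repl}), the blow-up $C$ is a stationary integral cone. Using the chain of replacements from Proposition~\ref{p2.7} — or rather the fact that $V$ has replacements in all sufficiently small annuli — one shows in the standard way that $C$ itself inherits a replacement property, hence $C$ is a stable minimal generalized hypersurface; this is the verbatim argument of \cite{DT} Lemma 5.2, and it yields the first assertion about general $n$. The regularity theory of \cite{SS} then gives that $C$ is smooth away from a set of dimension at most $n-7$.

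The equivariant improvement — that $C$ is a hyperplane (with multiplicity) when $\dim G.x\ge n-6$ — is where the appendix enters and is the main obstacle. The rescaling around the orbit $G.x$ does not merely produce a cone: it produces a cylinder, i.e.\ a cone that splits off the tangent space $T_xG.x$ as a Euclidean factor, because the $G$-action near the orbit looks, after blow-up, like translations in the $\dim G.x$ directions tangent to the orbit. This is precisely the splitting statement proved in the appendix: the tangent cone at $x$ to a $G$-invariant stationary varifold is of the form $T_xG.x\times C_0$, where $C_0$ is a stationary cone in the normal directions, living in $\R^{n+1-\dim G.x}$. If $\dim G.x\ge n-6$, then $C_0$ is a stationary cone of dimension at most $6$ in an ambient Euclidean space of dimension at most $7$; by the Simons-type dimension bound on singular sets (or directly, since a stable minimal cone of dimension $\le 6$ in $\R^{\le 7}$ must be a hyperplane), $C_0$ is a multiplicity-$q$ hyperplane, so $C=T_xG.x\times C_0$ is a multiplicity-$q$ hyperplane. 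The $n\le 6$ case is the classical statement with no group action needed. The delicate points to get right are: verifying that the blow-up really does split (rather than merely being asymptotically invariant under the orbit directions), which is exactly what the appendix handles via a monotonicity/constancy argument for the radial variation in the orbit directions; and checking that the replacement property survives the rescaling so that $C_0$ is stable and the \cite{SS} regularity applies in the reduced dimension. Once the splitting is in hand, the rest is bookkeeping with dimensions.
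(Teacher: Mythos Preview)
Your outline for the tangent-cone part is essentially the paper's argument: blow up at a point of the orbit, invoke the appendix (Lemma~\ref{stc} and Corollary~\ref{tstc}) to split $C=T_xG.x\times W$ with $W$ a stable cone in the normal $(n+1-\dim G.x)$-dimensional space, then apply Simons in the reduced dimension. That part is fine.

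The gap is in the first step, the density lower bound and rectifiability. You write that the transfer from $V'$ to $V$ goes ``exactly as in \cite{DT}'', but it does not. In \cite{DT} the replacement lives in a \emph{geodesic} annulus $B_t(x)\setminus \ov{B_\tau(x)}$, so from $\no{V}(M)=\no{V'}(M)$ and $V=V'$ outside the annulus one immediately gets $\no{V}(B_t(x))=\no{V'}(B_t(x))$, and since a regular point $y$ of $V'$ sits inside $B_t(x)$ the monotonicity comparison is direct. Here the replacement lives in a $G$-annulus $\an(x,r,2r)=B^G_{2r}(x)\setminus\ov{B^G_r(x)}$, which is a \emph{tube} around the orbit. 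A geodesic ball $B_\rho(x)$ of radius comparable to $r$ does \emph{not} contain this tube (its extent along the orbit is the full diameter of $G.x$), so you cannot compare $\no{V}(B_\rho(x))$ and $\no{V'}(B_\rho(x))$ directly; they may differ. The paper bridges this via Lemma~\ref{la1}: cover $B^G_{4r}(x)$ by $|\cb|\le C_x r^{-d_x}$ disjoint geodesic balls along the orbit, then use $G$-invariance of both $V$ and $V'$ to get
\[
|\cb|\,\no{V'}(B_{2r}(y))\le \no{V'}(B^G_{4r}(x))=\no{V}(B^G_{4r}(x))\le |\cb|\,\no{V}(B_{20r}(x)),
\]
so the $|\cb|$ cancels and one obtains $\no{V}(B_{20r}(x))/(20r)^n\ge c\,\ta(y,V')\ge c$. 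Without this tube-to-ball conversion your argument does not close. You should also not skip the step showing $V'\ip\an\neq 0$ (the paper uses a maximum-principle argument, since otherwise $\textnormal{supp}\no{V'}$ would touch a mean-convex tube boundary from inside); your phrase ``the replacement is regular enough to serve as a model'' presupposes there is something there to be regular.
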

	\begin{proof}
		First, fix $x\in\text{supp}\no{V}$ and $0<r<\min\{\frac{r(x)}{20},\text{Inj}(M)/4\}.$ Let $V'$ be the replacement of $V$ in the annulus $\an(x,r,2r).$ We claim that $\no{V'}\not\equiv 0$ on $\an(x,r,2r).$ If that's note the case, then there would be $\rh\le r$ and $\e$ such that $\textnormal{supp}(\no{V'})\cap \pd B^G_\rh(x)\not=\es$ and $\textnormal{supp}(\no{V'})\cap \an(x,\rh,\rh+\e)=\es.$ By choice of $\rh$ this would contradict Theorem 5.1(ii) in \cite{DT}. For the assumption of convexity in that theorem, we only need to use Theorem 5.1(ii) in \cite{DT} locally, so we can choose $r$ small enough to make the tubes very close to cylinders in local charts, which would satisfy the convexity assumption required in the proof of that Theorem. Alternatively, as remarked in the footnote 2 on page 425 of \cite{BW}, a local version of the theorems in \cite{BW} also holds, so we can use the maximum principle Theorem 1 or 5 in \cite{BW} to prove Theorem 5.1(ii) for our case.
		
		Thus, $V'\ip \an(x,r,2r)$ is a non-empty minimal generalized hypersurface. Thus, there exists $y\in \an(x,r,2r)$ with $\ta(y,V')\ge 1.$ Without loss of generality, we can assume $y\in B_{2r}(x)\sm B_r(x)$. (By $G$-invariance, we can always use some $g.y$ to substitute $y$ that sits in this geodesic annulus). By applying Lemma \ref{la1} and letting $20r<\rh_0,$ and $G$-invariance of $V$, we have
		\begin{equation}\label{e5.1}
		\begin{split}
		\no{V}(B_{4r}^G(x))&\le \no{V}\left(\bigcup_{z\in \mathcal{B}}B_{20r}(z)\right)\\
		&\le |\mathcal{B}|\no{V}(B_{20r}(x)).
		\end{split}
		\end{equation}By definition of replacements, we have
		\begin{align}\label{e5.2}
		\no{V}(B^G_{4r}(x))=\no{V'}(B^G_{4r}(x)).
		\end{align}
		Note that the collection $\mathcal{B}$ of balls is disjoint in Lemma \ref{la1}, so by $G$-invariance of $V'$ we have
		\begin{equation}\label{e5.3}
		\begin{split}
		\no{V'}(B_{4r}^G(x))\ge \no{V'}\left(\bigcup_{z\in \mathcal{B}}B_{4r}(z)\right)\ge |\cb|\no{V'}(B_{4r}(x))\ge|\cb|\no{V'}(B_{2r}(y)).
		\end{split}
		\end{equation}
		Combining (\ref{e5.1}), (\ref{e5.2}), and (\ref{e5.3}), and dividing by $|\cb|(20r)^n$ we deduce that
		\begin{align*}
		\frac{\no{V}(B_{20r}(x))}{(20r)^n}\ge 10^n \frac{\no{V'}(B_{2r}(y))}{(2r)^n}.
		\end{align*}
		By the monotonicity formula and $\ta(y,V')\ge 1$ we can deduce that there exists constant $C_M\ge 0$ so that $$\no{V'}(B_{2r}(y))\ge C_M\m\ta(y,V')(2r)^n.$$
		Thus, we have
		\begin{align*}
		\frac{\no{V}(B_{20r}(x))}{(20r)^n}\ge 10^n C_M\m\ta(y,V')\ge 10^n C_M.
		\end{align*}
		This implies $\ta(x,V)$ is uniformly bounded away from $0$ on $\text{supp}(\no{V})$ and Allard's rectifiability theorem (5.5 in \cite{WA}) implies that $V$ is rectifiable. 
		
		Use $C$ to denote the tangent cone to $V$ at $x$ and let $\rh_k\to 0$ a sequence with $V_{\rh_k}^x\to C.$ By $G$-invariance, the pushforward by any $g\in G$ of these cones $dg( C)$ and blowing up sequence  $dg(V_{\rh_k}^x)$ are also tangent cones and blowing ups at $g.x$. The rest of the proof goes the same as proof of Lemma 5.2 in \cite{DT}. One difference is that we have to substitute annulus in $T_x M$ with annulus around $i\pf T_y G.y,$ the tangent to the orbit $G.y,$ i.e., 
		\begin{align*}
		\textnormal{ann}(r,s,T_yG.y)=\{v\in T_yM|r<\text{dist}_{T_yM}(v,T_yG.y)<s\}.
		\end{align*} 
		The other is deducing the regularity of the cone $C$ in $T_yG.x.$ By Lemma \ref{stc}, $C=T_yG.x\times W$ where $W$ is supported in $(T_yG.x)^\perp.$ By blowing up a corresponding sequence of replacements as in formula (5.3) to (5.5) \cite{DT}. We can deduce that $C=T_yG.x\times W$ and thus $W$ is a stable cone. Since $W$ lies in $(T_yG.x)^\perp,$ which has dimension $n-\dim G.x,$ we deduce that $W$ is a multiple of plane if $3\le$$n-\dim G.x\le 6,$ and have singularity of Hausdorff dimension at most $n-\dim G.x-7$ if $n-\dim G.x\ge 7.$  { Only in the final part of the argument do we need to impose cohomogeneity larger than $2$, where we have used Simons' theorem on stable cones to $W$.}
	\end{proof}
	\subsection{Proof of Proposition \ref{p2.8}}
	\se{Step 1 to 4}
	The same with Section 5.4 in \cite{DT}. 
	\se{Step 5}
	The center of our $B_r^G(x)$ are the orbits $G.x$ instead of points. If no orbits are of dimension at least $n-6,$ then we're fine. However, for those of dimension at least $n-6,$ then we shall invoke the Lemma 5.2 to deduce that the tangent cones are still hyperplanes. The same reasoning in Section 5.4 of \cite{DT} dealing with dimension lower than $7$ can be used in our case to deduce the proposition. {We can also invoke the recently proved sharp unique continuation and sharp maximum principles of \cite{NW2} to to substitute the PDE arguments used in Section 5.4 of \cite{DT}, so that we do not need to set up normal coordinates. The main conclusion of Step 2 can be deduced directly from the unique continuation Theorem 1.2 in \cite{NW2}. In Step 3, notice that both $\Ga'$ and $\Ga''$ are smooth up to codimension $7$ sets and possibly $\ga.$ By transversality assumption, $\Ga'$ is smooth along $\ga.$ However in Step 2, we proved that the tangent cones are $\ga$ are multiplicity $1$ planes so $\Ga''$ is smooth along $\ga$ by Allard regularity. Thus, again we can use Theorem 1.2 in \cite{NW2} to glue $\Ga'$ and $\Ga''$. In Step 5 we can use the maximum principle Theorem 1.1 in \cite{NW2} to substitute the one used in \cite{DT}.}
	\section{Cohomogeneity $1$ case}\label{coho1}
	By the classification in \cite{PM}, we have either $M/G=S^1$ or $M/G=[-1,1].$  Let $\Si_t=\pi\m(t)$ where $t\in [-1,1]$ and we identify $S^1$ with $[-1,1]/\-1\sim 1\}.$ By Proposition 1 in \cite{TP}, $\hm^n(\Si_t)$ depends smoothly on $t$ on principal orbits and extends continuously to $0$ on exceptional orbits. If $M/G=S^1,$ then every orbit is principal, so we must have a critical point, which gives a smooth minimal generalized hypersurface. If $M/G=[-1,1]$, then $\hm^n(\Si_{-1})=\hm^n(\Si_1)=0,$ and $\hm^n(\Si_t)$ is smooth on $(-1,1)$ and continuous on $[-1,1]$, so there must exists a critical point. 
	
	\section*{Acknowledgements}
	The author is very fortunate to have been introduced to the world of geometric measure theory by Professor William Allard, who has read the very first draft of this paper and given invaluable suggestions. The author cannot thank him enough for many stimulating and revelatory conversations about GMT. Also, the author would like to thank Professor Robert Bryant for countless helpful discussions on Lie groups, transformation group theory and Riemannian geometry, and his unwavering support. Indeed, this paper originates from a conversation about invariant cycles in Lie groups with Professor Bryant. The author also thanks Professor Hubert Bray for many helpful meetings and constant encouragement. He would like to thank Professor
	Camillo De Lellis and Fernando Cod\'a Marques for their interest in this work and pointing out the reference \cite{DK}. {The author also would like to thank the referee for countless helpful advice on both the structure of the paper and the writing of some proofs, which improves the readability of the paper by a large margin. A special thanks goes to Antoine Song for pointing out that the regularity argument does not work for cohomogeneity 2 case.} Last but not least, he is indebted to Professor David Kraines, who partially funded the research in the paper with PRUV Fellowship.
	
	\appendix
	\addcontentsline{toc}{section}{Appendices}
	\renewcommand{\thesection}{\Alph{section}}
	\section{Appendix}
	\subsection{Ball covering of tubes}
	We will prove the following useful lemma.
	\begin{lem}\label{la1}
		For any $y\in M,$ there exists $\rh_0>0$ so that for any $\rh<\rh_0,$ there exists a collection $\mathcal{B}$ of disjoint geodesic balls of radius $\rh$ with centers in $G.y$ so that the concentric balls with radius $5\rh$ covers $B^G_\rh(y)$. Moreover, the number of balls in this collection is at most $C_y \rh^{-d_y},$ where $C_y$ is a constant depending only on $G.y,$ and $d_y=\dim G.y.$
	\end{lem}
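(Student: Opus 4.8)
The plan is to reduce the covering statement on the tube $B^G_\rh(y)$ to a covering statement on the orbit $G.y$ itself, which is a compact submanifold, and then use a standard Vitali-type argument together with a volume comparison. First I would observe that since $G$ acts by isometries, the tube $B^G_\rh(y)$ is precisely the set of points within distance $\rh$ of the orbit $G.y$; in particular $B^G_\rh(y)=\bigcup_{z\in G.y}B_\rh(z)$, the union of ordinary geodesic balls of radius $\rh$ centered at points of the orbit. So it suffices to choose finitely many centers $z_1,\dots,z_N\in G.y$ such that the balls $B_\rh(z_i)$ are pairwise disjoint and the enlarged balls $B_{5\rh}(z_i)$ already cover $G.y$ (hence the $\rh$-neighborhood of each $z_i$, hence all of $B^G_\rh(y)$ once one checks $B_\rh(z)\s B_{5\rh}(z_i)$ whenever $d(z,z_i)<2\rh$, which follows from the triangle inequality).

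The selection of centers is the Vitali $5r$-covering argument applied inside the compact metric space $G.y$ with the induced length metric (or just the ambient metric, up to a bi-Lipschitz factor which only affects constants): take a maximal $2\rh$-separated subset $\{z_i\}$ of $G.y$; maximality forces every point of $G.y$ to lie within $2\rh$ of some $z_i$, and $2\rh$-separation makes the balls $B_\rh(z_i)$ disjoint. Since $G.y$ is compact this set is finite. For the cardinality bound I would fix $\rh_0>0$ smaller than the injectivity radius of $M$ and small enough that, on the tube $B^G_{\rh_0}(y)$, the ambient volume of a geodesic ball satisfies $\vol(B_s(p))\ge c_M s^{n+1}$ for all $s\le \rh_0$ (Bishop–Gromov). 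The disjoint balls $B_\rh(z_i)$ all lie in $B^G_{2\rh}(y)$, so $N\cdot c_M\rh^{n+1}\le \vol(B^G_{2\rh}(y))$. Now I estimate $\vol(B^G_{2\rh}(y))$ from above: the normal exponential map of $G.y$ is a diffeomorphism onto the tube for $\rh$ small, so the tube fibers over $G.y$ with fibers of dimension $n+1-d_y$ and radius $2\rh$, giving $\vol(B^G_{2\rh}(y))\le C\,\hm^{d_y}(G.y)\,(2\rh)^{n+1-d_y}$. Combining, $N\le C_y\rh^{-d_y}$ with $C_y$ depending only on $G.y$ (through $\hm^{d_y}(G.y)$ and the geometry of the tube), as claimed.

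The main obstacle is the low-regularity/uniformity bookkeeping at the two ends: making sure a single $\rh_0$ works for the whole range $\rh<\rh_0$, and that the volume comparison constants genuinely depend only on $G.y$ and not on $\rh$. Both are handled by choosing $\rh_0$ below the injectivity radius of $M$ and below the radius for which $\exp^\perp$ on the normal bundle $N(G.y)$ is a diffeomorphism onto $B^G_{\rh_0}(y)$ (such a radius exists by compactness of $G.y$ and the tubular neighborhood theorem, cf. \cite{GB}), and then invoking the (scale-invariant) Bishop–Gromov volume bounds, whose constants depend only on a lower Ricci bound and upper diameter bound for $M$. No equivariance beyond "$G$ acts by isometries" is needed for the covering itself; equivariance only enters implicitly through the fact that $B^G_\rh(y)$ is the metric $\rh$-neighborhood of the orbit.
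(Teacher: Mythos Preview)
Your proof is correct and follows essentially the same approach as the paper: a Vitali $5r$-covering with centers on the orbit $G.y$, followed by the cardinality bound via the tube-volume upper estimate divided by a geodesic-ball volume lower estimate. The only cosmetic difference is that the paper cites Federer's $5r$-covering lemma directly on the family $\{B_\rh(z)\}_{z\in G.y}$, whereas you build the maximal $2\rh$-separated net explicitly; the substance is identical.
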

	\begin{proof}
		Here we use a basic 5-times-radius covering theorem (putting $\tau=2$ and $\de$ as diameter in 2.8.5 in \cite{HF}), that says for a covering using metric balls in metric space, we can find a disjoint subcollection so that 5-times-radius concentric balls of this subcollection would cover all the original balls. Now consider the covering of $B_{\rh}^G(y)$ by $\{B_{\rh}(z)|z\in G.y\}.$ We deduce that there exists a set $\mathcal{B}$ consisting of finitely many points so that $B_{\rh}(z)\cap B_{\rh}(z')=\es$ if $z\not=z',z,z'\in \mathcal{B}$ and 
		\begin{align*}
		B^G_{\rh}(y)\s \bigcup_{z\in \mathcal{B}}B_{5\rh}(z).
		\end{align*}
		Note that the cardinality of $\mathcal{B}$ satisfies the following obvious bound
		\begin{align*}
		|\mathcal{B}|\le \frac{\vol(B_{\rh}^G(y))}{B_{\rh}(y)},
		\end{align*}since $G$ acts by isometries and thus pushes forward geodesic balls to geodesic balls.
		
		Let $$d_y=\dim G.y.$$ Recall the volume of tubes in \cite{AG}. There exists $\rh_0>0$ so that for all $\rh<\rh_0$, we would have $$\vol(B^G_{\rh}(y))\le C_{d_y} \hm^{d_y}(G.y)\rh^{n+1-d_y},$$ for some dimensional constant $C_{d_y}>0.$ Moreover, by the volume of geodesic balls, we could assume that
		$
		\vol(B_{\rh}(y))\ge C_{n}\rh^{n+1},
		$
		for some dimensional constant $C_n>0$ by shrinking $\rh_0$ if necessary. If $\rh<\rh_0,$ then there exists $C_y>0$ depending only on $G.y$ and $M$ such that 
		\begin{align*}
		|\mathcal{B}|\le C_y \rh^{-d_y}.
		\end{align*}
		
	\end{proof}
	\subsection{Splitting of Tangent Cone of Integral $G$-varifold}\label{sotc}
	Let $G.x$ be an orbit of dimension $d_x$, and $B_\rh^G(x)$ be the $\rh$-tubular neighborhood around $x$. Suppose $V$ is a rectifiable $G$-varifold in $\mathbf{V}_n$ and $x$ is in $\text{spt}V$. We will prove the following lemma which implies that the tangent cone splits as a product into normal directions and tangential directions to $G.x.$
	\begin{lem}\label{stc}
		For any point $y\in G.x,$ there exists a tangent cone $C_y\s T_y M$ of $V$, so that $C_y+w=C_y$ for any $w\in T_y G.x\s T_y M.$
	\end{lem}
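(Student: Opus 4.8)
The plan is to exploit the $G$-invariance of $V$ together with the structure of a tubular neighborhood around the orbit $G.x$, and to produce the desired tangent cone as a limit of blow-ups taken in a cleverly chosen chart. First I would fix $y \in G.x$ and set $d = d_x = \dim G.x$. The key observation is that the isotropy subgroup $G_y$ acts on $T_y M$ linearly via the isotropy representation, and that $T_y G.x \subset T_y M$ is an invariant subspace; more importantly, for each $w \in T_y G.x$ there is a one-parameter subgroup (or more precisely, using the exponential map of $M$, a family of isometries) whose differential at $y$ translates by $w$ to first order. Concretely, pick $w \in T_y G.x$ and choose a smooth curve $g_s \in G$ with $g_0 = e$ and $\frac{d}{ds}\big|_{s=0} (g_s . y) = w$. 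Then $g_s$ is a family of isometries fixing the orbit setwise, with $g_s \# V = V$ for all $s$.

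Next I would set up the blow-up. Let $\rho_k \to 0$ and consider the rescaled varifolds $V^y_{\rho_k}$ (the image of $V$ under $z \mapsto \exp_y^{-1}(z)/\rho_k$, pushed to $T_y M$), extracting a subsequence so that $V^y_{\rho_k} \to C_y$, a tangent cone. Now I would compare the blow-up of $V$ with the blow-up of $(g_{s_k})\# V = V$ where $s_k$ is chosen so that $s_k \to 0$ but $s_k / \rho_k \to$ some fixed nonzero constant — equivalently, so that the isometry $g_{s_k}$, after rescaling by $1/\rho_k$, converges to the translation $\tau_w : v \mapsto v + w$ on $T_y M$. This is the crux: because $\exp_y$ and the $G$-action are smooth, the rescaled isometries $z \mapsto \exp_y^{-1}(g_{s_k}.\exp_y(\rho_k z))/\rho_k$ converge in $C^1_{loc}$ to the affine map $v \mapsto v + w$ (the linear part tends to the identity since $dg_{s_k} \to \mathrm{id}$, and the translational part tends to $w$ by the choice of $s_k/\rho_k$). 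Since $(g_{s_k})\# V = V$, the two blow-up sequences $V^y_{\rho_k}$ and $(g_{s_k}\# V)^y_{\rho_k}$ are literally equal as varifolds, and passing to the limit — using that pushforward under $C^1_{loc}$-convergent proper maps is continuous with respect to varifold convergence on $T_y M$ — yields $(\tau_w)\# C_y = C_y$, i.e.\ $C_y + w = C_y$.

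Finally, to upgrade this from ``invariant under translation by each individual $w$'' to ``invariant under translation by all of $T_y G.x$'' one only needs linearity: the set of $w \in T_y M$ with $C_y + w = C_y$ is a linear subspace (it is closed under addition because translations compose, and closed under scalar multiples because $C_y$ is a cone, or simply because $T_y G.x$ is already a subspace and every element arises as a velocity $\frac{d}{ds}(g_s.y)$). Hence $C_y + w = C_y$ for all $w \in T_y G.x$, and a standard splitting lemma for cones invariant under a linear subspace of translations (as in the classical dimension-reduction arguments) gives $C_y = T_y G.x \times W$ for some cone $W$ supported in $(T_y G.x)^\perp$, which is the form used in the proof of Lemma \ref{l5.2}. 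The main obstacle I anticipate is making the $C^1_{loc}$-convergence of the rescaled isometries to the translation $\tau_w$ fully rigorous — in particular tracking how the ambient metric, the exponential chart, and the group action interact under rescaling, and checking that the error terms (of order $\rho_k$ from curvature and of order $s_k^2 = O(\rho_k^2)$ from the second-order behavior of $g_s$) genuinely vanish in the limit; once that is in hand, the varifold-convergence step and the algebraic splitting are routine.
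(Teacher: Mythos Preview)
Your proposal is correct and follows the same core strategy as the paper: pick a curve $g_s \in G$ with $\frac{d}{ds}\big|_{s=0}(g_s.y)=w$, use $(g_s)_\# V = V$, and show that after rescaling by $1/\rho_k$ the isometry $g_{\rho_k}$ becomes the translation $\tau_w$ in the limit. The paper even makes the identical choice $s_k = \rho_k$ (written there as $g(r_i^{-1})$ with $r_i = 1/\rho_k$). The one genuine technical difference is in how the convergence of the rescaled isometries to $\tau_w$ is carried out: the paper invokes the Moore--Schlafly equivariant isometric embedding of $M$ into some $\R^N$ on which $G$ acts linearly via a representation $\rho$, reducing everything to an explicit first-order Taylor expansion of $\rho(g(r_i^{-1}))c_i - y$ in $\R^N$, and then argues pointwise on the support of the cone; you instead work intrinsically at the varifold level, asserting $C^1_{loc}$-convergence of $z \mapsto \exp_y^{-1}(g_{s_k}.\exp_y(\rho_k z))/\rho_k$ to $\tau_w$ and appealing to continuity of varifold pushforward under $C^1_{loc}$-convergent maps. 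Your route avoids the equivariant embedding theorem and handles the varifold (not just its support) directly, which is arguably cleaner; on the other hand, the $C^1_{loc}$-convergence you correctly flag as the main obstacle is precisely what the paper's explicit $\R^N$ computation is designed to make transparent, so the two executions are really two ways of packaging the same estimate.
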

	\begin{proof}Without loss of generality, we can assume $y=x,$ since we can always pushforward our constructions by any element of $g.$ We will use $\exp$ to denote the restriction of exponential map in $T_yM$ inside a ball of injectivity radius. Let $r_i\to\infty$ be a sequence so that $(r_i )\pf \exp\m\pf (V)\to C$ as varifold, where $r_i$ is multiplication by $r_i$ in $T_yM.$ Note that we have $$(r_i)\pf \exp\m\pf G.y=i\pf T_y G.y,$$ if $i$ is the inclusion $G.y\hookrightarrow M.$
		
		By Main Theorem of \cite{MS}, we can isometrically embed $M$ into some $\R^N$ so that the action of $G$ on $M$ comes from a linear representation of $G$ on $\R^N.$ We will also denote this action as $\rh(g)z$ for $z\in\R^N.$ We will identify $M$ as a submanifold of $\R^N$ in the following reasoning.
		
		Let $c\in C_y$ be a point in the tangent cone. We will also regard it as a vector. We can find a sequence of points $c_j\in V$ so that $r_j\exp\pf\m c_j\to c.$ Let $g(t)$ be a smooth path in $G$ so that $g(0)=0$, $$\frac{d}{dt}\bigg|_{t=0}g(t).y=w.$$ Such a path exists by lifting a corresponding path starting with $w\in T_y G.y$ and staying in $G.y\approx G/G_y$. 
		
		Now, note that $g(r_i\m).c_i\in V.$ If we can prove that
		\begin{equation}\label{sfm}
		r_i\exp\m\pf (g(r_i\m).c_i)=c+w,
		\end{equation}
		then we are done.
		To prove this, we need to compare $\exp\pf\m(z)$ with $z-y$. First, note that $d(z-y)|_{T_y\R^N}=\id_{T_y\R^N}=d\exp\pf \m(z)|_{T_y\R^N}.$
		Thus, $\exp\pf\m(z)-(z-y)=O(d_M(z,y)^2)$ for $z\in M.$. Thus, we have
		\begin{align*}
		&r_i\exp\m\pf (g(r_i\m).c_i)\\=&r_i(g(r_i\m).c_i-y)+r_iO(d_M(g(r_i\m).c_i,y)^2)\\
		=&r_i(\rh(g(r_i\m))c_i-\rh(g(r_i\m))y+\rh(g(r_i\m))y-y)+r_iO(\no{\exp\m\pf(g(r_i\m).c_i)}^2)\\
		=&\rh(g(r_i\m))r_i\exp\m(c_i)+r_iO(\no{\exp\m (g(r_i\m).c_i)}^2)\\
		&+r_i(\rh(g(r_i\m))-\rh(g(0)))y+r_iO(\no{\exp\m g(r_i\m)y})+r_iO(\no{\exp\m\pf(g(r_i\m).c_i)}^2)\\
		=&\rh(g(r_i\m))r_i\exp\m(c_i)+r_i(\rh(g(r_i\m))-\rh(g(0)))y+O(r_i\m).
		\end{align*}
		Let $i\to\infty$, and we immediately get \ref{stc}.
	\end{proof}
	\begin{cor}\label{tstc}
		The support of any such cone $C_y$ as in \ref{stc} is a product of $T_yG.x$ and a rectifiable set $W$ supported in $i\pf(T_yG.x)^\perp.$
	\end{cor}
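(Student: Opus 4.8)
The plan is to deduce Corollary \ref{tstc} directly from Lemma \ref{stc} together with the structure theory of rectifiable varifolds, so this is essentially a ``bookkeeping'' corollary rather than a new construction. First I would recall that, by Lemma \ref{stc}, there is a tangent cone $C_y\s T_yM$ with $C_y+w=C_y$ for every $w\in T_yG.x$. Decompose $T_yM=T_yG.x\op (T_yG.x)^\perp$ orthogonally, and let $\pi^\perp:T_yM\to (T_yG.x)^\perp$ be the orthogonal projection. The translation invariance of the \emph{support} under the linear subspace $T_yG.x$ means $\operatorname{spt}\no{C_y}=\pi^\perp\m(W_0)$ where $W_0=\operatorname{spt}\no{C_y}\cap (T_yG.x)^\perp$; that is, the support is a cylinder over the ``slice'' $W_0$. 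Since $C_y$ is a stationary (indeed a tangent cone to a stationary varifold, hence stationary by Lemma \ref{l5.2} or the standard theory) integral $n$-varifold, it is itself rectifiable, and its density is constant along the translation orbits; so $C_y$ is the varifold product of the $d_x$-plane $T_yG.x$ (with multiplicity one) and an integer rectifiable $(n-d_x)$-varifold $W$ supported on $W_0\s(T_yG.x)^\perp$. Setting $W$ to be that slice varifold gives the claimed splitting $C_y = T_yG.x\times W$.

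The key steps, in order, are: (i) invoke Lemma \ref{stc} to obtain a tangent cone $C_y$ invariant under translations by $T_yG.x$; (ii) observe that a \emph{closed} set invariant under translation by a linear subspace $L$ is exactly the preimage under orthogonal projection $\pi^{L^\perp}$ of its intersection with $L^\perp$, so $\operatorname{spt}\no{C_y}$ is a genuine cylinder; (iii) use that $C_y$ is integer rectifiable of dimension $n$ (being a tangent cone of the rectifiable varifold $V$, cf.\ Lemma \ref{l5.2}) and that both the approximate tangent planes and the multiplicity are translation-invariant $\no{C_y}$-a.e., to conclude that at $\no{C_y}$-a.e.\ point the tangent plane contains $L=T_yG.x$; (iv) apply the constancy/slicing structure (e.g.\ the coarea-type decomposition of a rectifiable varifold cylindrical over $L$, or simply Fubini on $L\times L^\perp$) to write $C_y$ as the varifold product $\V(L,1)\times W$ with $W$ integer rectifiable in $L^\perp$; (v) identify $W$ with the slice $\operatorname{spt}\no{C_y}\cap (T_yG.x)^\perp$ and record that $W$ is rectifiable and supported in $i\pf(T_yG.x)^\perp$, which is the assertion.

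The only genuinely substantive point — and the one I would write out carefully — is step (iii)/(iv): passing from ``the support is translation-invariant'' to ``the varifold itself splits as a product.'' A set being a cylinder does not by itself force the varifold to be a product; one needs that the density function $\Theta(\cdot,C_y)$ is invariant under $L$-translations (which follows because pushing $C_y$ forward by a translation in $L$ is the identity, by Lemma \ref{stc}), and then that the approximate tangent $n$-plane at a rectifiable point must contain $L$ (if it missed a direction $v\in L$, translating by $tv$ would move the point off the rectifiable set, contradicting invariance). Once the tangent planes all contain $L$ and the density is $L$-invariant, the standard product structure for rectifiable varifolds over a cylinder gives $C_y=\V(L,1)\times W$. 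Everything else is routine linear algebra on the splitting $T_yM=T_yG.x\op (T_yG.x)^\perp$.
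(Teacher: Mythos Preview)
Your proposal is correct and follows the same route as the paper: define $W$ as the slice $C_y\cap (T_yG.x)^\perp$ and invoke the translation invariance of Lemma \ref{stc}. The paper's own proof is a single line (``Take $W=C_y\cap i\pf(T_yG.x)^\perp$ and use Lemma \ref{stc}''), so you have simply written out what that line is suppressing; in particular you prove the stronger varifold splitting $C_y=\V(T_yG.x,1)\times W$, which is in fact what the paper uses in Lemma \ref{l5.2} even though the corollary is phrased only for supports.
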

	\begin{proof}
		Take $W=C_y\cap i\pf(T_yG.x)^\perp$ and use Lemma \ref{stc}.
	\end{proof}

\end{document}